\def\ST{{\rm s.t.}}
\def\R{{\mathbb R}}
\newtheorem{observ}{Observation}[section]
\newtheorem{rem}{Remark}[section]
\def\ST{\songti\rm\relax}
\def\R{{\mathbb R}}
\def\ST{{\rm s.t.}}
\crefname{hypothesis}{Hypothesis}{Hypotheses}
\title{On globally solving nonconvex trust region subproblem via projected gradient method\thanks{Submitted to the editors DATE.
\funding{This research was supported by the National Natural Science Foundation of China (Grant Nos.  12171021 and 11822103), and the Beijing Natural Science Foundation (Grant No. Z180005).}}}
\author{Mengmeng Song\thanks{School of Mathematical Sciences, Beihang University, Beijing 100191, People's Republic of China,
\{songmengmeng, yxia, sanjin\}@buaa.edu.cn.}
\and Yong Xia\footnotemark[2]
\thanks{Corresponding author. }
\and Jinyang Zheng\footnotemark[2]
}
\DeclareMathOperator{\diag}{diag}
\begin{document}

\maketitle

\begin{abstract}
The trust region subproblem (TRS) is to minimize a possibly nonconvex quadratic function over a Euclidean ball. There are typically two cases for (TRS), the so-called  ``easy case'' and ``hard case''. Even in the ``easy case'', the sequence generated by the classical projected gradient method (PG) may converge to a saddle point at a sublinear local rate, when the initial point is arbitrarily selected from a nonzero measure feasible set. To our surprise, when applying (PG) to solve a cheap and possibly nonconvex reformulation of (TRS), the generated sequence initialized with {\it any} feasible point almost always converges to its global minimizer. The local convergence rate is at least linear for the ``easy case'', without assuming that we have possessed the information that the ``easy case''  holds.  We also consider how to use (PG) to globally solve equality-constrained (TRS).
\end{abstract}
\begin{keywords}
{Trust region subproblem, Projected gradient method, Global optimization}
\end{keywords}

\begin{AMS}
90C26, 90C20, 90C52
\end{AMS}

\section{Introduction}
\label{intro}
The trust region subproblem (TRS) plays a great role in the trust
region method for solving nonlinear programming problems, see \cite{Conn00,yuan15}.
Typically, we can write it as the following (possibly nonconvex) quadratic optimization over the unit Euclidean ball:
\begin{equation}\label{eq:TRS}\tag{TRS}
	\begin{array}{cl}
		\min \left\{q(x)=\frac{1}{2}x^THx+c^Tx:~ x\in B_n=\{x\in\R^n: x^Tx\le 1\}\right\},
	\end{array}
\end{equation}
where $H=H^T\in \R^{n\times n}$ and $c\in \R^n$. When $H$ is not positive semidefinite, \eqref{eq:TRS} could have a (unique) local non-global minimizer, see \cite{Mart94,Wang9020} for full characterizations. However, nonconvex \eqref{eq:TRS} could be efficiently and globally solved based on the necessary and sufficient global optimality condition \cite{Gay81,More83,Sorensen82} or the hidden convexity property, see \cite{Xia2020} and references therein. In the literature, there are numerous  algorithms for globally solving \eqref{eq:TRS}, including the approaches based on finding the zero point of the secular function in terms of the dual variable by Newton's method \cite{More83}, by bisection method \cite{Hazan16} or by a hybrid algorithm combining the former two methods \cite{Y92}, the Lanczos methods  \cite{Carmon18,Gould99,Zhang17}, the sequential subspace method \cite{Hager01}, the eigenvector based methods  \cite{Adachi17,Sorensen97}, and so on. A notable simple first-order method is first computing the minimum eigenvalue of $H$, denoted by $\lambda_{1}$ throughout this paper, and then employing  Nesterov's accelerated gradient algorithm to solve the convex programming reformulation of \eqref{eq:TRS}:
\begin{equation}\label{eq:C}\tag{C}
	\begin{array}{cl}
		\min \left\{q(x)+\min\{\frac{\lambda_{1}}{2},0\}(1- x^Tx):\, x\in B_n\right\}.
	\end{array}
\end{equation}
This two-stage approach is independently proposed in \cite{Ho-Nguyen17,wang17}, while the convex reformulation  \eqref{eq:C} dates back to \cite{Flippo96,Y92}.

The classical projected gradient algorithm (PG) is an efficient first-order method to solve convex \eqref{eq:TRS}. It reads as
\begin{equation}\label{eq:pro}\tag{PG}
	x^{k+1}=P_{B_n}(x^k-\eta\nabla q(x^k))=\left\{
	\begin{aligned}
		&x^k-\eta\nabla q(x^k),&&\ {\rm if}\   \|x^k-\eta\nabla q(x^k)\|\le1\\
		&\frac{x^k-\eta\nabla q(x^k)}{\|x^k-\eta\nabla q(x^k)\|},&&\ {\rm otherwise},
	\end{aligned}
	\right.
\end{equation}
for $k=0,1,2, \cdots$,
where $x^0\in B_n$ is the initial point, $\eta\in(0, 2/L)$ is the step size, $L={\|H\|}_2$ (the spectral norm of $H$) is the Lipschitz constant so that the gradient $\nabla q(x)$ is Lipschitz continuous with the constant $L$, and $P_Q{(\cdot)}$ is the operator of metric projection onto the compact set $Q$ with respect to the Euclidean norm $\|\cdot\|$, i.e., for $y\in\R^n$,
\[
P_Q{(y)}:=\left\{x\in Q:~ \|x-y\|={\rm min}_{z\in Q} \|z- y\|\right\}.
\]
In our case, the feasible set $B_n$ is convex so that $P_{B_n}{(y)}$ is a single-point set and has a closed-form expression for all $y\in\R^n$.
The efficiency can be observed from the fact that the computation in \eqref{eq:pro} only relies on matrix-vector products.

One attempt of extending  \eqref{eq:pro} to solve nonconvex \eqref{eq:TRS} is the D.C. (difference-of-convex) scheme proposed in \cite{Tao98}. However, there is no guarantee that the returned solution is a global minimizer. Recently,
Beck and Vaisbourd \cite{Beck2018} studied a class of first-order methods, including \eqref{eq:pro} as an important case, for globally solving \eqref{eq:TRS}. They proved that in the ``easy case'', the sequence $\{x^k\}_{k=0}^{\infty}$ generated by \eqref{eq:pro} with $x^0=0$ converges to the globally optimal solution, and in the ``hard case'', the sequence $\{x^k\}_{k=0}^{\infty}$ generated by \eqref{eq:pro} converges to the globally minimizer with probability one when $x^0$ is uniformly and randomly selected from $B_n$. Based on these observations, Beck and Vaisbourd \cite{Beck2018} proposed double-start \eqref{eq:pro} for globally solving \eqref{eq:TRS}. More precisely, although it is unknown which case (``easy case'' or ``hard case'') holds, it is sufficient to employ \eqref{eq:pro} twice, each with a different initialization, and then return the better of the two obtained points. Compared with the two-stage approach based on  \eqref{eq:C}, double-start \eqref{eq:pro} has the benefit of running in parallel.

At the beginning of this study, we show by examples that two disadvantages exist for double-start \eqref{eq:pro}. Firstly, the worst convergence rate of double-start \eqref{eq:pro} even for the ``easy case'' is only sublinear. It is known that in the ``easy case'', the local convergence rate of $\{x^k\}_{k=0}^{\infty}$ generated by \eqref{eq:pro} with $x^0=0$ is linear \cite{Jiang22} as the sequence converges to the global minimizer.  However,
we can use an example in the ``easy case'' to show that
in the second run of  double-start \eqref{eq:pro} with $x^0$ being uniformly and randomly selected from $B_n$, $\{x^k\}_{k=0}^{\infty}$  could converge locally sublinearly to a saddle point with a probability greater than zero.
Secondly, the final two objective function values returned by
double-start \eqref{eq:pro} require a high degree of accuracy
for correctly comparing.  Otherwise, one may mistake a saddle point or a local non-global minimizer for the global minimizer. We have an example to show that the gap between local non-global minimum and global minimum can be arbitrarily small.

The main contribution of this research is to  present a
{\it one-stage} and {\it single-start} \eqref{eq:pro}.
We first present a cheap but novel reformulation of \eqref{eq:TRS}, which is a ($2n$)-dimensional \eqref{eq:TRS}. Though it is possibly nonconvex, the new reformulation has the nice property that any second-order stationary point (including local minimizer) is globally optimal.
We prove that the sequence obtained by applying \eqref{eq:pro} to solve the new reformulation almost always converges to its global minimizer for  both ``easy case'' and ``hard case''.
If the ``easy case'' holds (though we do not possess this information), the local convergence rate is at least linear. 
Finally, the global minimizer of original \eqref{eq:TRS} is  easily recovered by a closed-form expression.

As an extension, we consider globally solving the equality-constrained \eqref{eq:TRS}:
\begin{equation}\label{eq:TRS_e}\tag{TRSe}
	\min \left\{q(x)=\frac{1}{2}x^THx+c^Tx:\, x\in \partial B_n=\{x\in\R^n: x^Tx=1\}\right\},
\end{equation}
which itself  has fruitful applications \cite{PY2020}. Different from \eqref{eq:TRS}, the feasible region of \eqref{eq:TRS_e} is nonconvex.
Generalized projected gradient method (GPG) has been presented for  solving the optimization problems over  the nonconvex set, see \cite{Polyak19,Jain17}. For solving \eqref{eq:TRS_e}, the iterative scheme of (GPG)  is given by
\begin{equation}\label{eq:proe}\tag{PGe}
	x^{k+1}=P_{\partial B_n}(x^k-\eta_e\nabla q(x^k))=
	\frac{x^k-\eta_e\nabla q(x^k)}{\|x^k-\eta_e\nabla q(x^k)\|},
\end{equation}
where $\eta_e\in(0, 1/L]$ is the step size. If it holds that $x^k-\eta_e\nabla q(x^k)=0$, then the iteration stops as $x^k$ is already a stationary point. With additional assumptions made in \cite[Theorems 1 and 2]{Polyak19},  (GPG) is guaranteed to find the global minimizer. It is observed that the additional assumptions required in \cite[Theorems 1 and 2]{Polyak19} are too restrictive for \eqref{eq:proe}.
Jain and Kar claimed in  \cite[Theorem 3.3]{Jain17} that the sequence generated by (GPG) converges to the global minimizer under their assumptions. It is, however, not correct as illustrated by an example of \eqref{eq:TRS_e}.
On the other hand,  \eqref{eq:TRS_e} is an  optimization problem on the manifold $\partial B_n$ and can be solved by Riemannian gradient method (RG), see \cite{Absil08}. Lee et al. \cite{Lee2019} proved that  (RG) almost always avoids strict saddle points, where the step size has been corrected in \cite{Zheng22}.
However, \eqref{eq:TRS_e} could have a non-strict saddle point or even a local non-global minimizer which can not be avoided, see examples in \cite{Wang9020}. So  (RG) may fail to find the global minimizer.
In the second part of this research, we first show that \eqref{eq:TRS_e} can be globally solved by employing \eqref{eq:proe} to solve a reformulation  similar to that of \eqref{eq:TRS}. Finally, to our surprise, we can build a cheap \eqref{eq:TRS}-reformulation of \eqref{eq:TRS_e} so that it can be  globally solved with a step size larger than that of \eqref{eq:proe}.

In the following, we list the contributions of this study.
\begin{itemize}
\item Two disadvantages of double-start \eqref{eq:pro} are illustrated by examples.
\item We cheaply lift \eqref{eq:TRS} to an equivalent (possibly nonconvex) \eqref{eq:TRS} in $\R^{2n}$ so that any second-order stationary point (including local minimizer) is globally optimal.
The global minimizer of the original \eqref{eq:TRS} can be recovered from that of the new reformulation through a closed-form expression.
	\item When solving the new reformulation of \eqref{eq:TRS} by  \eqref{eq:pro} with an initial point uniformly and randomly selected from $B_{2n}$, we prove that the generated sequence converges to the global minimizer with probability one.
	\item  In the ``easy case'',  the local convergence rate of our approach is linear, while it could be sublinear for the second run of double-start \eqref{eq:pro}.
	\item We generalize \eqref{eq:pro} to globally solve the  new similar reformulation of \eqref{eq:TRS_e}.
	\item
As a new approach for solving  \eqref{eq:TRS_e}, we apply \eqref{eq:pro} to globally solve a novel cheap \eqref{eq:TRS}-reformulation of \eqref{eq:TRS_e},  with a step size larger than that of the generalized \eqref{eq:pro}.
\end{itemize}

The remainder of this paper is organized as follows. Section 2 first presents classical optimality conditions for \eqref{eq:TRS}, and then gives two examples to illustrate two disadvantages of double-start \eqref{eq:pro}. Section 3 establishes an equivalent reformulation of \eqref{eq:TRS} with a nice property and then proves that the iterative sequence generated by \eqref{eq:pro} for solving this new reformulation almost always converges to its global minimizer. Section 4 considers globally solving \eqref{eq:TRS_e}. We conclude the paper in Section 5.

{\bf Notation.} Denote by $v(\cdot)$ the optimal value of the problem $(\cdot)$.
Let $I$ be the identity matrix of proper dimension.
For a square matrix $B$, $B\succeq 0$ means that $B$ is positive semidefinite,
${B}^\dagger$ stands for the Moore-Penrose pseudoinverse of $B$,   ${\rm tr}(B)$ gives the trace of $B$,  $\lambda_{\min}(B)$ denotes the minimum eigenvalue $B$, and
${\rm Range}(B)$ returns the range (or column) space of $B$. For a vector $x$, $\|x\|$ denotes the Euclidean norm of $x$. For two scalars $a<b$,  $(a,b):=\{x\in\R:~a<x<b\}$ and $(a,b]:=\{x\in\R:~a<x\le b\}$.

\section{Preliminaries}
\subsection{Known results of \eqref{eq:TRS}}
Note that the linear independence constraint qualification (LICQ) always holds at any feasible point of \eqref{eq:TRS}. For $x^*\in\R^n$, if there exists $\lambda^*$ such that KKT condition
\begin{eqnarray}
	&& \lambda^*\ge 0,\ (H+\lambda^*I)x^*+c=0,\label{eq:stationary}\\
	&& \lambda^*(x^{*T}x^*-1)=0,\  x^{*T}x^*-1\le0\label{eq:com}
\end{eqnarray}
hold, then $x^*$ is called a stationary point, the pair $(x^*, \lambda^*/2)$ is called a KKT point, and the nonnegative number $\lambda^*/2$ is called a KKT multiplier corresponding to $x^*$.
According to the classical optimization theory, any local minimizer of \eqref{eq:TRS} must be a stationary point.  A stationary point is called a saddle point if it is not locally optimal.
In 1980s, \eqref{eq:TRS} is proved to enjoy the following necessary and sufficient condition at its global minimizer.
\begin{lemma}[\cite{Gay81,More83,Sorensen82}]\label{le:global}
$x^*$ is a global minimizer of \eqref{eq:TRS} if and only if there exists a unique $\lambda^*$ such that
$(x^*, \lambda^*/2)$ is a KKT point and $\lambda^*\ge-\lambda_1$, where $\lambda_{1}$ is the minimum eigenvalue of $H$.
\end{lemma}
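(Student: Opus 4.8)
The plan is to prove the two directions separately, with the ``if'' direction being essentially an exercise in convexity and the ``only if'' direction carrying the real content. For the \emph{sufficiency} direction, suppose $(x^*,\lambda^*/2)$ is a KKT point with $\lambda^*\ge-\lambda_1$. The latter inequality says exactly that $H+\lambda^*I\succeq 0$. Consider the auxiliary quadratic $\phi(x)=q(x)+\tfrac{\lambda^*}{2}(x^Tx-1)=\tfrac12 x^T(H+\lambda^*I)x+c^Tx-\tfrac{\lambda^*}{2}$. Since $H+\lambda^*I\succeq 0$, $\phi$ is convex, and the stationarity equation $(H+\lambda^*I)x^*+c=0$ says $\nabla\phi(x^*)=0$, so $x^*$ is a global minimizer of $\phi$ over $\R^n$. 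Hence for every feasible $x\in B_n$ we have $q(x)\ge \phi(x)+\tfrac{\lambda^*}{2}(1-x^Tx)\ge \phi(x)$ (using $\lambda^*\ge0$ and $x^Tx\le1$), and $\phi(x)\ge\phi(x^*)=q(x^*)$ by complementarity $\lambda^*(x^{*T}x^*-1)=0$. Therefore $x^*$ is a global minimizer of \eqref{eq:TRS}.

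For the \emph{necessity} direction, let $x^*$ be a global minimizer. Since LICQ holds everywhere on $B_n$, the KKT conditions \eqref{eq:stationary}--\eqref{eq:com} hold at $x^*$ with some multiplier $\lambda^*\ge0$; it remains to show $\lambda^*\ge-\lambda_1$, i.e.\ $H+\lambda^*I\succeq0$. Here I would split into the two standard cases. If $\|x^*\|<1$, then $x^*$ is an interior minimizer, so the unconstrained second-order necessary condition gives $H\succeq0$ and $\lambda^*=0$ by complementarity, whence trivially $H+\lambda^*I=H\succeq0$. If $\|x^*\|=1$, take any unit eigenvector $v$ of $H$ associated with $\lambda_1$ and, for a suitable sign choice, move along the boundary from $x^*$ toward $v$: compare $q$ at $x^*$ with $q$ evaluated at the feasible point $x^*+t(v-x^*)$ rescaled onto or inside $B_n$ — equivalently, use the global optimality inequality $q(x)\ge q(x^*)$ with $x=\pm v$ together with $\|x^*\|=\|v\|=1$. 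Expanding $q(\pm v)-q(x^*)\ge0$ and invoking $(H+\lambda^*I)x^*=-c$ yields, after simplification, $\tfrac12 v^T(H+\lambda^*I)v\ge \tfrac{\lambda^*}{2}(1-\langle \pm v,x^*\rangle^2)\cdot(\text{something nonneg})$; the point is that the cross terms involving $c$ cancel via stationarity and one is left with $v^T(H+\lambda^*I)v=\lambda_1+\lambda^*\ge0$. Finally, \emph{uniqueness} of $\lambda^*$: from $(H+\lambda^*I)x^*=-c$, if two multipliers $\lambda^*_1\ne\lambda^*_2$ both worked then $(\lambda^*_1-\lambda^*_2)x^*=0$, forcing $x^*=0$; but then $c=0$ and $q\equiv\tfrac12 x^THx$, and the condition $\lambda^*\ge-\lambda_1\ge0$ with $\lambda^*x^{*T}x^*=0$ automatically... — here one checks that $x^*=0$ is globally optimal only when $H\succeq0$, in which case $\lambda^*=0$ is forced by complementarity and wanting the smallest valid multiplier, so uniqueness still holds (this degenerate subcase needs a careful line).

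The main obstacle is the necessity argument in the boundary case: producing the right feasible comparison point so that the $c$-terms cancel cleanly and one extracts positive semidefiniteness of $H+\lambda^*I$ on all of $\R^n$ rather than merely on a subspace. The cleanest route is probably not to test against a single eigenvector but to argue by contradiction: if $H+\lambda^*I$ had a negative eigenvalue with eigenvector $w$, build a feasible perturbation $x(t)$ with $\|x(t)\|=1$, $x(0)=x^*$, $\dot x(0)\parallel w$, and show $\tfrac{d^2}{dt^2}\phi(x(t))|_{t=0}<0$ while the first derivative vanishes, contradicting global (indeed local) optimality of $x^*$ on the sphere — this is the construction that must be set up with care, and it is exactly the step where the geometry of $\partial B_n$ enters. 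Everything else (KKT existence from LICQ, the convexity computation in sufficiency, the short uniqueness argument) is routine. Since the result is classical and attributed to \cite{Gay81,More83,Sorensen82}, I would keep the exposition brief, citing those sources and only sketching the perturbation construction.
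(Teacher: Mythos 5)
The paper does not prove this lemma at all: it is quoted verbatim from the cited classics \cite{Gay81,More83,Sorensen82}, so there is no in-paper proof to compare against. Your sketch is the standard argument from those references. The sufficiency direction (convexity of $q(x)+\tfrac{\lambda^*}{2}(x^Tx-1)$ when $H+\lambda^*I\succeq0$, plus complementarity) is complete and correct, and your uniqueness argument is fine once you drop the ``smallest valid multiplier'' remark --- if $x^*=0$ then complementarity alone forces $\lambda^*=0$, so two distinct multipliers are impossible.

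The one place your sketch does not yet close is the boundary case of necessity, and the specific $\pm v$ computation you write down does not go through as stated: testing global optimality at $x=\pm v$ produces the quadratic form of $H+\lambda^*I$ in the direction $\pm v-x^*$, which is not an eigendirection, so you cannot read off $\lambda_1+\lambda^*\ge0$ from it. The clean device (and the one Mor\'e--Sorensen actually use) is the exact identity, valid for any KKT pair by substituting $c=-(H+\lambda^*I)x^*$ and completing the square:
\begin{equation*}
q(x)-q(x^*)=\tfrac12 (x-x^*)^T(H+\lambda^*I)(x-x^*)+\tfrac{\lambda^*}{2}\left(\|x^*\|^2-\|x\|^2\right).
\end{equation*}
When $\|x^*\|=1$, restricting to $\|x\|=1$ kills the second term, so global optimality gives $(x-x^*)^T(H+\lambda^*I)(x-x^*)\ge0$ for all unit $x$. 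For any direction $w$ with $w^Tx^*\neq0$, the point $x=x^*+tw$ with $t=-2w^Tx^*/\|w\|^2$ lies on the unit sphere, yielding $w^T(H+\lambda^*I)w\ge0$; the case $w^Tx^*=0$ follows by continuity (perturb $w$ to $w+\epsilon x^*$ and let $\epsilon\to0$). This replaces both your eigenvector test and the second-order curve construction on $\partial B_n$, and it proves positive semidefiniteness on all of $\R^n$ in one stroke. With that substitution your proposal is a correct and complete proof of the classical result.
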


In the literature,  if $c\notin{\rm Range}(H-\lambda_1 I)$, it is called that ``easy case'' holds for \eqref{eq:TRS}, and ``hard case'' otherwise.
Let $x^*$ be a global minimizer of \eqref{eq:TRS}, and $\lambda^*/2$ be the corresponding KKT multiplier. If the ``easy case''  occurs, by Lemma \ref{le:global} and $c\notin{\rm Range}(H-\lambda_1 I)$, it must hold that $\lambda^*>-\lambda_1$.
The ``hard case'' can be further split into three subcases, see details in Table \ref{tab:tab1}, where only the third case is called ``ill case''.  Correspondingly, ``easy case'' and the first two subcases of ``hard case'' are collectively referred to as ``well case''.
\begin{table}[h!]
	\begin{center}
		\caption{Three subcases of ``hard case'' for \eqref{eq:TRS}.}\label{tab:tab1}
		\begin{tabular}{|c|c|c|}
			\hline
			\multicolumn{3}{|c|}{ hard case $(c\in{\rm Range}(H-\lambda_1 I))$} \\
			\hline
			hard case (i) & hard case (ii)  &hard case (iii) (\textbf{ill case})\\
			\hline
			$\lambda^*>-\lambda_1 $&  $\lambda^*=-\lambda_1, ~\|(H-\lambda_1 I)^\dagger c\|<1$ & $\lambda^*=-\lambda_1, ~\|(H-\lambda_1 I)^\dagger c\|=1$\\
			\hline
		\end{tabular}
	\end{center}
\end{table}

If  the iterative sequence generated by \eqref{eq:pro} converges to a global minimizer of \eqref{eq:TRS} $x^*$, Jiang and Li \cite{Jiang22} proved that the local convergence rate is at least linear and sublinear for  ``well case''  and ``ill case'', respectively.
\begin{lemma}{\rm (\cite[Theorem 5.1]{Jiang22})}\label{le:rate}
	Let $\{x^k\}_{k=0}^{\infty}$ be the sequence generated by \eqref{eq:pro} with the constant step size $\eta\in (0, 2/L)$ and assume that $\{x^k\}_{k=0}^{\infty}$ converges to $x^*$.  Then for any given $\epsilon>0$, there exists a sufficiently large  positive integer $K$ such that $\{x^k\}_{k\ge K} \subset B(x^*, \epsilon)=\{x:~\|x-x^*\|\le\epsilon\}$, and in the ``ill case'',	it holds that \[f(x^k)-f^*\le\frac{1}{{\left(\frac{k-K}{2M^2+\frac{3}{2}\sqrt{f(x^K)-f^*}}+\frac{1}{\sqrt{f(x^K)-f^*}}\right)}^2}, ~\forall k\ge K.\]
	Otherwise, we have
	\[
	f(x^k)-f^*\le{\left(\frac{M^2}{M^2+1}\right)}^k(f(x^K)-f^*), ~\forall k\ge K,
	\]
	where $M$ is a constant related to the input data in \eqref{eq:TRS} and the step size $\eta$.
\end{lemma}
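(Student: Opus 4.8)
Since this estimate is quoted from \cite{Jiang22}, I only outline the route one would take. Write $e_k:=f(x^k)-f^*$, let $X^*$ be the (possibly non-singleton) set of global minimizers of \eqref{eq:TRS} (so $x^*\in X^*$), and let $G_\eta(x):=\tfrac1\eta\big(x-P_{B_n}(x-\eta\nabla f(x))\big)$ be the gradient mapping, so that $x^{k+1}=x^k-\eta\,G_\eta(x^k)$. The first step is the standard sufficient-decrease estimate, obtained by combining the descent lemma for the $L$-smooth $f$ with the variational inequality characterizing $P_{B_n}$:
\[
e_k-e_{k+1}\ \ge\ \gamma\,\|x^{k+1}-x^k\|^2\ =\ \gamma\,\eta^2\,\|G_\eta(x^k)\|^2,\qquad \gamma:=\tfrac1\eta-\tfrac L2>0 .
\]
In particular $\{e_k\}$ is nonincreasing; since $x^k\to x^*\in X^*$ it tends to $0$, and the inclusion $\{x^k\}_{k\ge K}\subset B(x^*,\epsilon)$ for $K$ large is immediate from $x^k\to x^*$. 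The rate will then follow from a local {\L}ojasiewicz-type inequality for $G_\eta$ fed into this estimate.

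\emph{Local growth of $f$.} Fix $\bar x\in X^*$. By Lemma \ref{le:global}, $\nabla f(\bar x)=-\lambda^*\bar x$ with $\lambda^*\ge-\lambda_1$, hence for $x\in B_n$,
\[
f(x)-f^*\ =\ -\lambda^*\langle \bar x,\,x-\bar x\rangle+\tfrac12\,(x-\bar x)^\TT H(x-\bar x).
\]
In the ``well case'' ($\lambda^*>-\lambda_1$, or $\lambda^*=-\lambda_1$ with $\|(H-\lambda_1 I)^\dagger c\|<1$) one derives quadratic growth $f(x)-f^*\ge\kappa\,{\rm dist}(x,X^*)^2$ near $X^*$; when $\lambda^*>-\lambda_1$ this is immediate from $\langle\bar x,x-\bar x\rangle\le-\tfrac12\|x-\bar x\|^2$ (valid whenever $\|\bar x\|=1$), whereas hard case (ii)---where $X^*$ is the smooth intersection of the affine subspace $-(H-\lambda_1 I)^\dagger c+\ker(H-\lambda_1 I)$ with $\partial B_n$---is more delicate and is handled by splitting $x-\bar x$ into its components in $V_1:=\ker(H-\lambda_1 I)$ and $V_1^{\perp}$, using $c\perp V_1$. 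In the ``ill case'' ($\lambda^*=-\lambda_1$, $\|(H-\lambda_1 I)^\dagger c\|=1$), $X^*=\{x^*\}$ and $x^*\perp V_1$; writing $x-x^*=u+w$ with $u\in V_1$, $w\in V_1^{\perp}$, feasibility $\|x\|^2\le1$ forces $-\langle x^*,w\rangle\ge\tfrac12(\|u\|^2+\|w\|^2)$, hence $\|w\|\gtrsim\|u\|^2$, and since $Hu=\lambda_1 u$ and $\lambda_1=-\lambda^*$ the displayed identity collapses to $f(x)-f^*\gtrsim\|w\|^2\gtrsim\|u\|^4$; thus $f(x)-f^*=\Theta({\rm dist}(x,X^*)^4)$, the growth degrading to quartic along $V_1$ while staying quadratic transversally.

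\emph{{\L}ojasiewicz inequality for $G_\eta$, and the main obstacle.} Feeding the growth estimate into the explicit form of $\nabla f$ and a Hoffman/Luo--Tseng-type error bound for the piecewise-affine map $P_{B_n}$ gives $c_0>0$ and a neighborhood $\mathcal N$ of $X^*$ on which $\|G_\eta(x)\|\ge c_0\,(f(x)-f^*)^{\theta}$, with $\theta=\tfrac12$ in the ``well case'' and $\theta=\tfrac34$ in the ``ill case''---equivalently, $\|G_\eta(x)\|$ controls ${\rm dist}(x,X^*)$, linearly in the quadratic directions and as a cube root in the quartic ones. I expect this step to be the crux: it is where the degenerate geometry of \eqref{eq:TRS} in the hard/ill cases meets the nonsmoothness of $P_{B_n}$ on $\partial B_n$, so the subcases $\lambda^*=0$ and $\lambda^*=-\lambda_1$ must be separated and the gradient mapping carefully compared with the distance to $X^*$ (one cannot afford the crude bound $f(x)-f^*\le\tfrac L2\,{\rm dist}(x,X^*)^2$, which would lose the quartic structure and yield a worse exponent).

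\emph{Telescoping.} Choose $\epsilon$ so small that $B(x^*,\epsilon)\subset\mathcal N$ and then $K$ as above, so that $x^k\in\mathcal N$ for all $k\ge K$. The sufficient-decrease estimate and the {\L}ojasiewicz inequality give $e_k-e_{k+1}\ge\gamma\eta^2c_0^2\,e_k^{2\theta}$. If $\theta=\tfrac12$ this is $e_{k+1}\le(1-\gamma\eta^2c_0^2)e_k$, and setting $\tfrac{M^2}{M^2+1}:=1-\gamma\eta^2c_0^2$ gives the linear bound. If $\theta=\tfrac34$, put $u_k:=e_k^{-1/2}$; from $e_{k+1}\le e_k(1-\gamma\eta^2c_0^2\sqrt{e_k})$ together with $(1-t)^{-1/2}\ge1+\tfrac t2$ one gets $u_{k+1}\ge u_k+\tfrac{\gamma\eta^2c_0^2}{2}$, hence $u_k\ge u_K+\tfrac{\gamma\eta^2c_0^2}{2}(k-K)$, which is the claimed $O((k-K)^{-2})$ bound after absorbing constants; a sharper accounting of this last inequality and of the directions in which quadratic and quartic growth mix produces the precise constant $2M^2+\tfrac32\sqrt{f(x^K)-f^*}$ appearing in the statement, while monotonicity of $\{e_k\}$ lets one replace $\sqrt{e_k}$ by $\sqrt{e_K}$ throughout. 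The sufficient-decrease and telescoping parts are routine; the local growth estimate is a direct (if tedious) computation from Lemma \ref{le:global}.
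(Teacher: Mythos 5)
The paper does not prove this lemma; it is quoted verbatim from \cite[Theorem 5.1]{Jiang22}, so there is no in-paper argument to compare against. Judged on its own terms, your outline correctly reproduces the architecture of the known proof: sufficient decrease of the form $e_k-e_{k+1}\ge(\tfrac1\eta-\tfrac L2)\|x^{k+1}-x^k\|^2$, a case-dependent growth/error bound (quadratic in the ``well case'', quartic along $\ker(H-\lambda_1 I)$ in the ``ill case''), and the standard telescoping that converts a Kurdyka--{\L}ojasiewicz exponent of $\tfrac12$ into a linear rate and an exponent of $\tfrac34$ into an $O((k-K)^{-2})$ rate via the substitution $u_k=e_k^{-1/2}$. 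Your local growth computations (the inequality $\langle\bar x,x-\bar x\rangle\le-\tfrac12\|x-\bar x\|^2$ on $\partial B_n$, and the ill-case chain $-\langle x^*,w\rangle\ge\tfrac12(\|u\|^2+\|w\|^2)\Rightarrow f(x)-f^*\gtrsim\|w\|^2\gtrsim\|u\|^4$) are correct.

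The genuine gap is the step you yourself flag as the crux: the inequality $\|G_\eta(x)\|\ge c_0\,(f(x)-f^*)^{\theta}$ near $X^*$ with $\theta=\tfrac12$ (well case) and $\theta=\tfrac34$ (ill case). This is not a routine consequence of the growth estimate plus a generic Hoffman-type bound; passing from fourth-order growth of $f$ to a lower bound on the \emph{gradient mapping} (rather than on $\nabla f$ restricted to the sphere) requires decomposing $G_\eta$ along $\ker(H-\lambda_1 I)$ and its complement, tracking whether the projection is active, and separating the subcases $\lambda^*=0$ and $\lambda^*=-\lambda_1$. That computation is essentially the entire technical content of \cite{Jiang22}, and your proposal asserts it rather than carries it out. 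Two smaller points: your quadratic-growth argument in the well case silently assumes $\|\bar x\|=1$, so the interior-solution case $\lambda^*=0$, $\|x^*\|<1$ needs a separate (easy, strong-convexity) treatment; and the specific constants in the sublinear bound, $2M^2+\tfrac32\sqrt{f(x^K)-f^*}$, do not follow from the generic inequality $u_{k+1}\ge u_k+\text{const}$ without the finer bookkeeping you only allude to. As an outline of the cited proof your proposal is faithful; as a proof it is incomplete at exactly the step that makes the theorem nontrivial.
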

\subsection{Disadvantages of double-start \eqref{eq:pro}}\label{sec:beck}
Double-start \eqref{eq:pro} \cite{Beck2018} employs  \eqref{eq:pro} twice with $x^0$ being $0$ and uniformly distributed over $B_n$. The two independently generated iterative  sequences converge to two stationary points, denoted by $x_0^*$ and $x_B^*$, respectively. It is proved in \cite{Beck2018} that  $x_0^*$ is globally optimal in the ``easy case'', and  $x_B^*$ is globally optimal with probability one in the ``hard case''.  Though it is unknown which case holds, one can output the smaller one of $q(x_0^*)$ and $q(x_B^*)$ as the global minimum, as done in double-start \eqref{eq:pro}.

According to Lemma \ref{le:rate},
the local convergence rate of the sequence converging to $x_0^*$ in the ``easy case'' is linear.  However, the local convergence rate of the sequence converging to $x_B^*$ remains unknown in the ``easy case'',  in case that $x_B^*$ is not a global minimizer.

Based on an example motivated by \cite{Wang9022},  we have the following observation on the convergence and local convergence rate of double-start \eqref{eq:pro}.
\begin{observ}
In the ``easy case'' of \eqref{eq:TRS}, the iterative sequence generated by \eqref{eq:pro}
with $x^0$ being uniformly and randomly selected from $B_n$  could converge to a saddle point  with a probability larger than zero. Moreover, the local convergence rate could be sublinear.
\end{observ}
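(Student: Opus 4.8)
The plan is to construct an explicit low-dimensional instance of \eqref{eq:TRS} in the ``easy case'' for which \eqref{eq:pro} started from a positive-measure set of initial points converges to a saddle point rather than to the global minimizer, and then to analyze the local rate at that saddle point. A natural choice is to take $H=\diag(\lambda_1,\lambda_2,\dots)$ diagonal with $\lambda_1<0<\lambda_2\le\cdots$, and $c=(c_1,c_2,\dots,c_n)^T$ with $c_1\ne 0$ so that $c\notin{\rm Range}(H-\lambda_1 I)$, guaranteeing the ``easy case''. By the separable structure, \eqref{eq:pro} decouples coordinate-wise as long as iterates stay strictly inside $B_n$, and the boundary projection only couples them through the common normalization factor. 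For a well-chosen sign pattern of $c$ and magnitudes of the $\lambda_i$'s, the KKT system \eqref{eq:stationary}--\eqref{eq:com} will admit, besides the global minimizer (with $\lambda^*>-\lambda_1$), a second boundary stationary point $\bar x$ with multiplier $\bar\lambda/2$ where $-\lambda_1>\bar\lambda$ (hence $\bar x$ is not globally optimal by Lemma \ref{le:global}); such $\bar x$ is a saddle point since a non-global boundary stationary point of \eqref{eq:TRS} with $H$ indefinite is not a local minimizer.

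Next I would exhibit a full-dimensional region of attraction for this saddle point. The key mechanism is that, because the component of $c$ along the $\lambda_1$-eigenspace is \emph{nonzero} but can be made arbitrarily small, the ``stable'' behavior (the linear contraction towards the global minimizer) is felt only in a thin neighborhood, whereas from most of $B_n$ the fixed-point map of \eqref{eq:pro} drives iterates toward $\bar x$. Concretely, I would show that the normalized iteration map $T(x)=P_{B_n}(x-\eta\nabla q(x))$ leaves invariant a suitable half-ball (e.g. $\{x_1\le 0\}$ or $\{x_1\ge 0\}\cap B_n$, depending on the sign of $c_1$) and is a contraction toward $\bar x$ on that invariant set — this uses only that $\eta\in(0,2/L)$ and elementary estimates on the spectral data. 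Since that invariant set has positive (indeed, constant) Lebesgue measure in $B_n$, a uniformly random $x^0$ lands in it with positive probability, and along that event the generated sequence converges to the saddle point $\bar x$.

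For the rate claim, I would invoke the structure near $\bar x$: the binding constraint has multiplier $\bar\lambda/2$ with $\bar\lambda=-\lambda_1$ in the degenerate sub-configuration, so that $H+\bar\lambda I$ is singular with the $\lambda_1$-eigendirection in its kernel, i.e., the second-order sufficient conditions fail in exactly the way that produces a sublinear tail in Lemma \ref{le:rate}'s ``ill case'' analysis. More carefully, I would tune the example so that $\bar x$ sits in the ``ill-case-like'' regime \emph{relative to the saddle point} — the Hessian of the reduced problem on $\partial B_n$ at $\bar x$ has a zero eigenvalue — and then a one-dimensional calculation of $x_1^{k+1}$ as a function of $x_1^k$ near $\bar x$ yields an update of the form $x_1^{k+1}=x_1^k-\Theta((x_1^k)^3)+\cdots$, whose iterates converge to $0$ at the rate $O(1/\sqrt k)$, matching the sublinear bound in Lemma \ref{le:rate}. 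The main obstacle will be the last point: ruling out linear convergence requires a careful lower bound on $f(x^k)-f(\bar x)$ (not just the upper bound that Lemma \ref{le:rate} supplies), which means I must track the exact cubic/quadratic leading term of the iteration near the degenerate saddle and argue it cannot be accelerated away — a delicate but self-contained one-variable asymptotic analysis. Handling the interaction between the coordinatewise interior dynamics and the boundary normalization (showing the iterates actually reach and then stay on $\partial B_n$ near $\bar x$) is the secondary technical nuisance, addressed by checking that $\|x^k-\eta\nabla q(x^k)\|>1$ eventually holds on the invariant region.
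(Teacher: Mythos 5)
Your overall strategy---exhibit a concrete two-dimensional diagonal instance in the ``easy case'' whose \eqref{eq:pro} dynamics has a saddle point with a fat basin, and tie the sublinear rate to a degeneracy at that saddle---is exactly the route the paper takes (its Example~\ref{ex:1} has $H=\diag(-13,13)$, $c=(-250/169,3456/169)$, saddle $x_B^*=(-5/13,-12/13)$ with multiplier $\bar\lambda=119/13$, and one checks that the tangent vector $v=(12,-5)/13$ satisfies $v^T(H+\bar\lambda I)v=0$, i.e.\ precisely the degenerate reduced Hessian you call for). The paper, however, only supports the Observation by this example together with numerical evidence for the positive-measure basin and the sublinear rate; your plan aims at an analytic argument, which would be stronger if the following gaps were repaired.

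First, your justification that $\bar x$ is a saddle---``a non-global boundary stationary point of \eqref{eq:TRS} with $H$ indefinite is not a local minimizer''---is false as a general principle: nonconvex \eqref{eq:TRS} can have a local non-global minimizer (see \cite{Mart94,Wang9020} and the paper's own Example~\ref{ex:2}), and indeed $\bar\lambda=119/13$ lies in the interval $(\max\{-\lambda_2,0\},-\lambda_1)$ where local non-global minimizers live. You must verify saddle-ness for the specific instance, e.g.\ via the vanishing reduced Hessian together with a nonvanishing third-order term along the sphere. Second, your ``degenerate sub-configuration'' with $\bar\lambda=-\lambda_1$ cannot occur in the easy case: $(H-\lambda_1 I)\bar x+c=0$ would force $c\in{\rm Range}(H-\lambda_1 I)$, contradicting the easy-case hypothesis. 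The correct degeneracy (which you do state in your ``more carefully'' sentence) is that $H+\bar\lambda I$ is nonsingular and indefinite but its restriction to the tangent space $\{v:v^T\bar x=0\}$ vanishes. Third, claiming the iteration map is ``a contraction toward $\bar x$'' on the invariant set is inconsistent with the sublinear rate you then want to establish---a contraction gives geometric convergence. Convergence to $\bar x$ on the invariant region must instead come from monotone decrease of $q$ plus the fact that $\bar x$ is the only stationary point in that region; also the natural invariant region is not a half-ball $\{x_1\le 0\}$ but is delimited near the repelling interior fixed point $x_1=c_1/\lambda_1$ of the decoupled first-coordinate dynamics. With these repairs your one-variable cubic asymptotics $x_1^{k+1}=x_1^k-\Theta((x_1^k)^3)$ giving an $O(1/k)$ decay of $q(x^k)-q(\bar x)$ is the right mechanism and would upgrade the paper's numerical Observation to a proof.
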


\begin{example}\label{ex:1}
Consider the instance of \eqref{eq:TRS} with $n=2$ and
\begin{equation}\label{eq:ex1}
q(x)=-\frac{13}{2}x_1^2+\frac{13}{2}x_2^2-\frac{250}{169}x_1+\frac{3456}{169}x_2.
\end{equation}
We can verify that the ``easy case'' holds.
With Lemma \ref{le:global},  it can be proved that $x_0^*=(0.687, -0.726)$ is a global minimizer and $x_B^*=(-5/13, -12/13)$  is a saddle point.
\end{example}

We employ \eqref{eq:pro} with the step size $\eta=1/13$ to solve  Example \ref{ex:1}.
As illustrated in Figure \ref{fi:dis},
initialized with $200$ points uniformly and randomly selected from $B_2$, all the independently generated sequences converge to either the global minimizer $x_0^*$ or the saddle point $x_B^*$. It is clearly observed that the initial points  for returning $x_B^*$ build a nonzero measure set in $B_2$.
\begin{figure}
	\centering
	\includegraphics[width=0.7\textwidth]{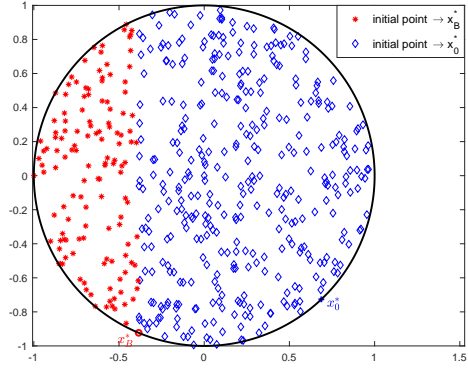}
	\caption{Distribution of $200$ initial points starting from which the iterative sequences generated by \eqref{eq:pro} converge to either the global minimizer $x_0^*$ or the saddle point $x_B^*$.}\label{fi:dis}
\end{figure}
We plot in Figures \ref{fi:rate1} and \ref{fi:rate2} the convergence rates.
One can observe that the rates of convergence to the global minimizer $x_0^*$ and the saddle point $x_B^*$ are linear and sublinear, respectively.
	\begin{figure}[t] \label{fi:cov_rate}
		\centering
		\subfloat[Convergence to the global minimizer $x_0^*$.]{\label{fi:rate1}	\includegraphics[width=0.48\textwidth]{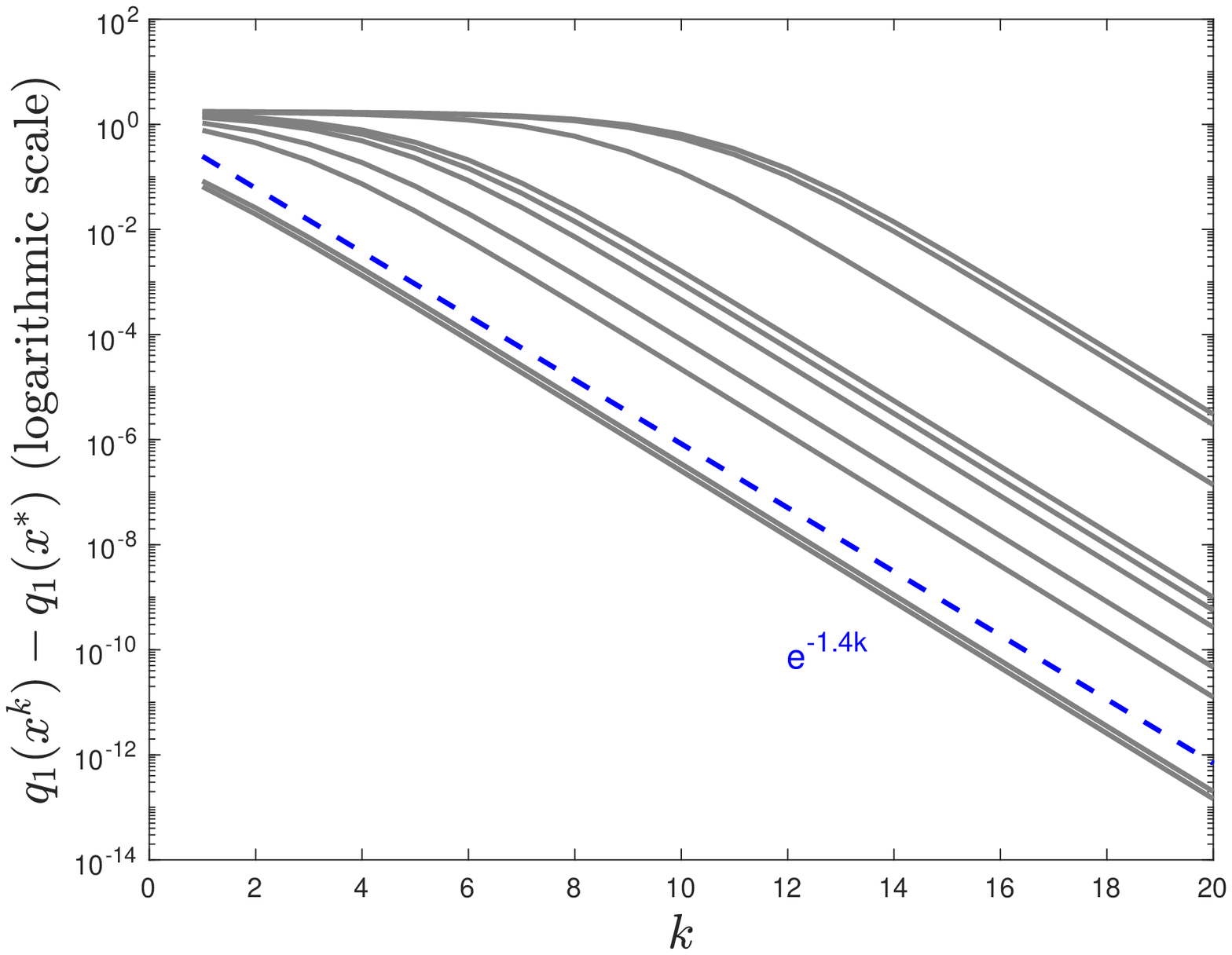}}
		~
		\subfloat[Convergence to the saddle point $x_B^*$.]{\label{fi:rate2}	\includegraphics[width=0.48\textwidth]{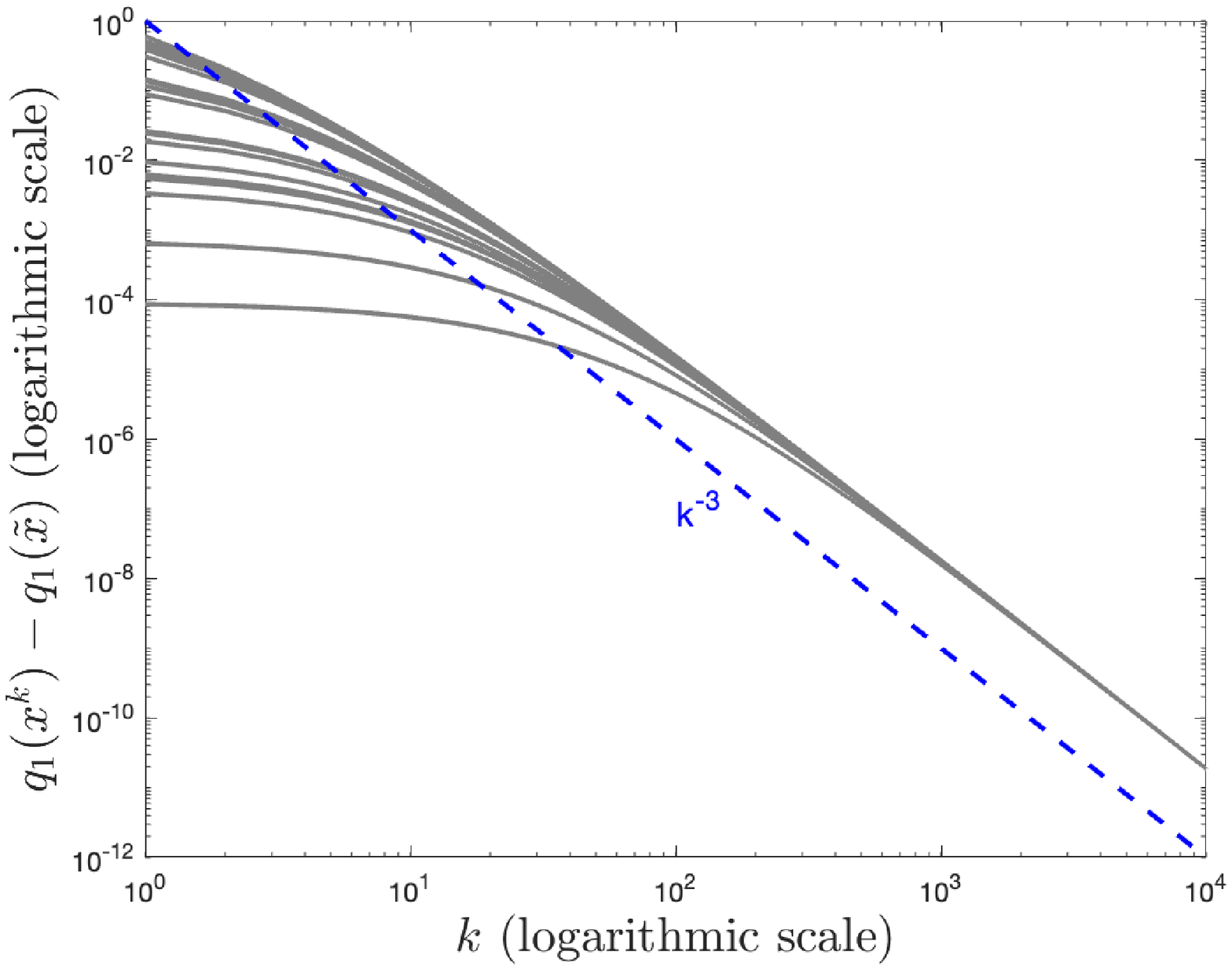}}
		\caption{The rates of convergence to the global minimizer $x_0^*$ and the saddle point $x_B^*$ initialized with different feasible points.}
	\end{figure}

The second disadvantage of double-start \eqref{eq:pro} is that
approximating $q(x_0^*)$ and $q(x_B^*)$ requires high enough  precision so that one can correctly compare both values. Otherwise, double-start \eqref{eq:pro} could mistake saddle point/local non-global minimizer for the global minimizer. The following example implies that the gap between the global minimum and the local non-global minimum could be arbitrarily small. We remark that the local non-global minimizer of \eqref{eq:TRS} has the second smallest objective function value among all KKT points \cite{Wang9022}.
\begin{example}\label{ex:2}
Consider the parameterized instance of \eqref{eq:TRS}$_{\tau}$  with $n=2$ and the objective function
	\begin{equation}\label{eq:ex2}
q_{\tau}(x)=\frac{13}{2}x_1^2-\frac{13}{2}x_2^2+4x_1+\tau{\left(x_2-\frac{\sqrt{165}}{13}\right)}^2,
	\end{equation}
where $\tau\ge0$ is a parameter. When $\tau=0$, there are two global minimizers of \eqref{eq:TRS}$_{0}$,	$\bar x=(-2/13, \sqrt{165}/13)$ and $\tilde x=(-2/13, -\sqrt{165}/13)$. For any sufficiently small $\tau>0$, it holds that
\[q_{\tau}(x)\ge q_{0}(x)\ge q_{0}(\bar x)=q_{\tau}(\bar x),~ {\rm\ for\ all\ } x\in B_2,\]
and hence $\bar x$ remains globally optimal for \eqref{eq:TRS}$_{\tau}$. For such sufficiently small $\tau>0$,
we can verify that \eqref{eq:TRS}$_{\tau}$ has a local non-global minimizer, denoted by $\tilde x(\tau)$, satisfying
that  $\lim_{ \tau\rightarrow 0^+ }\tilde x(\tau)=\tilde x$. Then we have
\[
\lim_{ \tau\rightarrow 0^+ } q_{\tau}(\tilde x(\tau))-q_{0}(\bar x)=
\lim_{\tau\rightarrow 0^+ } q_{\tau}(\tilde x(\tau))-q_{0}(\tilde x) =0.
\]
The computational inaccuracy could make double-start \eqref{eq:pro} for solving \eqref{eq:TRS}$_{\tau}$ output $\tilde x(\tau)$ as the global minimizer. Note that $\tilde x(\tau)$ is far from the true global minimizer $\bar x$.
\end{example}

\section{Globally solving \eqref{eq:TRS} via \eqref{eq:pro}}\label{sec:alg}
We employ \eqref{eq:pro} to solve a novel reformulation of \eqref{eq:TRS}. The iterative sequence almost always converges to its global minimizer. The local convergence rate is linear for the ``easy case'' without assuming that we have the information that ``easy case''  holds. The global minimizer of \eqref{eq:TRS} can be quickly recovered from that of the reformulation.
\subsection{A novel reformulation of \eqref{eq:TRS}}\label{subsec:alg}
The two-stage algorithm proposed in \cite{Ho-Nguyen17,wang17} is based on the  equivalent convex reformulation \eqref{eq:C} of \eqref{eq:TRS}, which requires calculating the minimum eigenvalue $\lambda_1$ in advance. For details, we have the following lemma.
\begin{lemma}{\rm\cite[Theorem 1]{Flippo96}}\label{le:wang17}
\eqref{eq:TRS} is equivalent to the convex programming relaxation \eqref{eq:C} in the sense that $v\eqref{eq:TRS}= v\eqref{eq:C}$. Moreover, suppose $\lambda_1<0$ and let $v_1$ be the corresponding eigenvector. Then for any optimal solution of \eqref{eq:C}, say $\tilde x$, we can
explicitly recover the global minimizer of \eqref{eq:TRS} $x^*$ as follows
	\begin{equation}\label{eq:sol}
		x^*=\tilde x+\frac{\sqrt{{(\tilde x^{T}v_1)}^2-v_1^Tv_1(\tilde x^{T}\tilde x-1)}-\tilde x^{T}v_1}{v_1^Tv_1}v_1.
	\end{equation}
\end{lemma}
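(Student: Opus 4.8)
The plan is to establish the two claims of Lemma \ref{le:wang17} separately: first the value equality $v\eqref{eq:TRS}=v\eqref{eq:C}$, and then the recovery formula \eqref{eq:sol}. For the value equality, note that \eqref{eq:C} is obtained from \eqref{eq:TRS} by adding $\min\{\lambda_1/2,0\}(1-x^Tx)$ to the objective. When $\lambda_1\ge 0$ this term is identically zero on the constraint-active set and the two problems visibly coincide, so assume $\lambda_1<0$. On the boundary $\partial B_n=\{x:x^Tx=1\}$ the added term vanishes, so $q$ and the new objective agree there. Since at any global minimizer $x^*$ of \eqref{eq:TRS} with $\lambda_1<0$ the constraint is active (this follows from Lemma \ref{le:global}: the KKT multiplier satisfies $\lambda^*\ge-\lambda_1>0$, hence $x^{*T}x^*=1$), we get $v\eqref{eq:C}\le q(x^*)=v\eqref{eq:TRS}$ by feasibility of $x^*$ in \eqref{eq:C}. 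For the reverse inequality, the objective of \eqref{eq:C} is $\frac12 x^T(H-\lambda_1 I)x+c^Tx+\frac{\lambda_1}{2}$ with $H-\lambda_1 I\succeq 0$, so \eqref{eq:C} is a genuine convex relaxation and every $x\in B_n$ with the new objective is bounded below by $v\eqref{eq:C}$; combined with the fact that on $B_n$ we have $1-x^Tx\ge 0$ and $\lambda_1/2<0$, the new objective is $\le q(x)$ pointwise on $B_n$, giving $v\eqref{eq:C}\le v\eqref{eq:TRS}$ directly. So the only nontrivial direction is $v\eqref{eq:C}\ge v\eqref{eq:TRS}$, which I would get by taking any optimal $\tilde x$ of \eqref{eq:C} and exhibiting a feasible point of \eqref{eq:TRS} with the same or smaller $q$-value — this is exactly what the recovery formula does, so the two parts of the lemma are really proved together.

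The heart of the argument, then, is to verify that the point $x^*$ defined by \eqref{eq:sol} is (i) feasible for \eqref{eq:TRS}, i.e. $\|x^*\|\le 1$, in fact $\|x^*\|=1$, and (ii) satisfies $q(x^*)\le q(\tilde x)$, hence $q(x^*)=v\eqref{eq:TRS}$. Write $x^* = \tilde x + t v_1$ where $t = \big(\sqrt{(\tilde x^Tv_1)^2 - (v_1^Tv_1)(\tilde x^T\tilde x - 1)} - \tilde x^Tv_1\big)/(v_1^Tv_1)$. For (i), $t$ is by construction the nonnegative root of the scalar quadratic $(v_1^Tv_1)t^2 + 2(\tilde x^Tv_1)t + (\tilde x^T\tilde x - 1) = 0$, which is precisely the equation $\|x^* \|^2 = \|\tilde x + t v_1\|^2 = 1$; the discriminant $(\tilde x^Tv_1)^2 - (v_1^Tv_1)(\tilde x^T\tilde x-1)$ is nonnegative because $\tilde x\in B_n$ forces $\tilde x^T\tilde x - 1\le 0$, so the square root is real and $t$ is well-defined and $\ge 0$. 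For (ii), the key identity is that $v_1$ is a null vector of $H-\lambda_1 I$, so $\frac12(x^*)^T(H-\lambda_1 I)x^* = \frac12\tilde x^T(H-\lambda_1 I)\tilde x + t\,\tilde x^T(H-\lambda_1 I)v_1 + \frac{t^2}{2}v_1^T(H-\lambda_1 I)v_1 = \frac12\tilde x^T(H-\lambda_1 I)\tilde x$, and similarly $c^Tv_1$ contributes a term I must control. Expanding $q(x^*) = \frac12(x^*)^T(H-\lambda_1 I)x^* + c^Tx^* + \frac{\lambda_1}{2}\|x^*\|^2 = \frac12(x^*)^T(H-\lambda_1 I)x^* + c^Tx^* + \frac{\lambda_1}{2}$ (using $\|x^*\|=1$), and comparing with the \eqref{eq:C}-objective at $\tilde x$, namely $\frac12\tilde x^T(H-\lambda_1 I)\tilde x + c^T\tilde x + \frac{\lambda_1}{2}$, the difference reduces to $t\, c^Tv_1$, which I would argue is $\le 0$: indeed, optimality of $\tilde x$ for the convex problem \eqref{eq:C} gives a first-order condition implying $c^Tv_1$ has a sign forcing the perturbation in the $v_1$ direction not to decrease the objective, hence $t\,c^Tv_1 \le 0$. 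Putting this together, $q(x^*) \le$ (\eqref{eq:C}-objective at $\tilde x$) $= v\eqref{eq:C} \le v\eqref{eq:TRS}$, and since $x^*$ is feasible for \eqref{eq:TRS} we get equality throughout, proving both $v\eqref{eq:TRS}=v\eqref{eq:C}$ and that $x^*$ is a global minimizer.

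The main obstacle I anticipate is the sign argument for the cross term $t\, c^Tv_1$ in step (ii). One has to be careful about whether $\tilde x$ lies in the interior or on the boundary of $B_n$: if $\tilde x$ is interior, the gradient of the \eqref{eq:C}-objective vanishes there, giving $(H-\lambda_1 I)\tilde x + c = 0$, whence $c^Tv_1 = -\tilde x^T(H-\lambda_1 I)v_1 = 0$ and the cross term is exactly zero; if $\tilde x$ is on the boundary, one uses the KKT condition $(H-\lambda_1 I)\tilde x + c + \mu \tilde x = 0$ with $\mu\ge 0$, dotting with $v_1$ to get $c^Tv_1 = -\mu\,\tilde x^Tv_1$, and then $t\,c^Tv_1 = -\mu\, t\, \tilde x^Tv_1$; here I must show $t\,\tilde x^Tv_1\ge 0$, which follows from the feasibility relation $\|x^*\|^2=1 \ge \|\tilde x\|^2$ rewritten as $2t\,\tilde x^Tv_1 + t^2 v_1^Tv_1 = 1 - \|\tilde x\|^2 \ge 0$, but this only gives $t\,\tilde x^Tv_1 \ge -\tfrac12 t^2 v_1^Tv_1$, so a bit more care (or directly using the explicit sign of $t$ together with which root was chosen) is needed to nail down the sign. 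A clean alternative that sidesteps the sign subtlety is to note that $x^*$ by construction satisfies $\|x^*\|=1$ and to simply compute $q(x^*) - q(\tilde x)$ directly, using $q(x) = (\text{\eqref{eq:C}-objective}) + (-\lambda_1/2)(1-x^Tx)$ and $\|x^*\|^2=1$, reducing everything to the null-space property of $v_1$ and the quadratic defining $t$; I would present the interior/boundary case split explicitly to keep the exposition rigorous.
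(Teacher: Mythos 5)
The paper offers no proof of this lemma---it is quoted verbatim from \cite[Theorem 1]{Flippo96}---so your proposal can only be judged on its own terms. Your overall strategy is the right one: the pointwise inequality gives $v\eqref{eq:C}\le v\eqref{eq:TRS}$, and the reverse inequality together with global optimality of $x^*$ both follow once you show that the point $x^*=\tilde x+tv_1$ of \eqref{eq:sol} satisfies $\|x^*\|=1$ and $q(x^*)\le g(\tilde x):=q(\tilde x)+\frac{\lambda_1}{2}(1-\tilde x^T\tilde x)$. Your feasibility computation (the choice of the nonnegative root $t$ of the quadratic $\|\tilde x+tv_1\|^2=1$) is correct, and since $(H-\lambda_1 I)v_1=0$ the comparison does reduce to the single cross term $t\,c^Tv_1$, exactly as you say.

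The gap you flag in the boundary case is genuine, and it cannot be closed the way you hope: when $\|\tilde x\|=1$ with multiplier $\mu>0$ and $\tilde x^Tv_1<0$, one has $t=-2\tilde x^Tv_1/(v_1^Tv_1)>0$ and $t\,c^Tv_1=-\mu\, t\,\tilde x^Tv_1>0$, so the displacement along $v_1$ strictly \emph{increases} the objective. A concrete instance: $H=\diag(-1,1)$, $c=(0.1,0)^T$, $v_1=e_1$ gives the unique $\tilde x=(-1,0)^T$, while \eqref{eq:sol} outputs $x^*=(1,0)^T$ with $q(x^*)=-0.4>-0.6=v\eqref{eq:TRS}$; the formula is sign-sensitive in $v_1$ and fails verbatim here unless one replaces $v_1$ by $-v_1$. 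The correct resolution is a different dichotomy from the one you set up. If $\|\tilde x\|=1$, the penalty term vanishes at $\tilde x$, so $q(\tilde x)=g(\tilde x)=v\eqref{eq:C}\le v\eqref{eq:TRS}$ and $\tilde x$ is \emph{itself} already a global minimizer of \eqref{eq:TRS}; no displacement is needed (and if one insists on \eqref{eq:sol}, the sign of $v_1$ must be chosen so that $\tilde x^Tv_1\ge 0$, forcing $t=0$). If $\|\tilde x\|<1$, stationarity of the convex objective gives $(H-\lambda_1 I)\tilde x+c=0$, hence $c\in{\rm Range}(H-\lambda_1 I)$, which is orthogonal to the null space containing $v_1$, so $c^Tv_1=0$; the cross term vanishes, $q(x^*)=g(x^*)=g(\tilde x)=v\eqref{eq:C}$, and \eqref{eq:sol} is correct as written. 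Your ``clean alternative'' of expanding $q(x^*)-q(\tilde x)$ directly does not sidestep this: the term $t\,c^Tv_1$ survives any such expansion, and its sign genuinely requires the interior/boundary case split above rather than the KKT sign argument you sketch.
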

Note that the cost of building \eqref{eq:C} is almost as high as solving \eqref{eq:TRS} itself, since computing $\min\{\lambda_1/2,0\}$ is equivalent to solving the following homogeneous \eqref{eq:TRS}:
\begin{equation*}
\begin{array}{cl}
\min\{\frac{\lambda_1}{2},0\}=\min_{y\in \R^n} \{\frac{1}{2}y^THy:~y^Ty\le1\}.
\end{array}
\end{equation*}
Accordingly,
for any given $x\in B_n$,  the additional term in the objective function of \eqref{eq:C} can be rewritten as a homogeneous \eqref{eq:TRS}:
\begin{equation*}
	\begin{array}{cl}
\min\{\frac{\lambda_{1}}{2},0\}(1- x^Tx)=\min_{y\in \R^n}\{\frac{1}{2}y^THy:\,y^Ty\le 1-x^{T}x\}.
	\end{array}
\end{equation*}
Therefore, \eqref{eq:C} is equivalent to the joint minimization problem in $\R^{2n}$:
\begin{equation}\label{eq:12}
	\begin{array}{cl}
		\min_{(x,y)\in\R^{2n}} \{q(x)+\frac{1}{2}y^THy:\, x\in B_n,\ y^Ty\le 1-x^{T}x\}.
	\end{array}
\end{equation}
Since the constraint $x\in B_n$ is redundant as
\[
y^Ty\le 1-x^{T}x~\Longrightarrow~ x^{T}x\le 1 ~({\rm i.e.,}~x\in B_n),
\]
the joint minimization problem \eqref{eq:12} equivalently reduces to  	
\begin{equation}\label{eq:C1}\tag{D}
	\begin{array}{cl}
		\min  \{\frac{1}{2}x^THx+\frac{1}{2}y^THy+c^Tx:\, (x,y)\in  B_{2n} \},
	\end{array}
\end{equation}
which remains a (possibly nonconvex) \eqref{eq:TRS} in $\R^{2n}$.
Though at the cost of double variables,
building \eqref{eq:C1} no longer depends on computing $\lambda_1$ when compared with \eqref{eq:C}.

\begin{theorem} \label{th:reformu}
\eqref{eq:TRS} is equivalent to \eqref{eq:C1} in the sense that $v\eqref{eq:TRS}= v\eqref{eq:C1}$.
Let $(\tilde x, \tilde y)$ be an global minimizer of \eqref{eq:C1}. Then, we can explicitly recover the global minimizer of \eqref{eq:TRS} $x^*$ as follows:
\begin{itemize}
\item[(i)] if $\tilde y=0$, then $x^*=\tilde x$,
\item[(ii)] if $\tilde y\neq 0$, then the ``hard case (ii)'' holds for \eqref{eq:TRS} and
\begin{equation}\label{eq:ysol1}
x^*=\tilde x+\frac{\sqrt{{(\tilde x^T\tilde y)}^2+\tilde y^{T}\tilde y(1-\tilde x^{T}\tilde x)}-\tilde x^{T}\tilde y}{\tilde y^{T}\tilde y}\tilde y.
\end{equation}
	\end{itemize}
\end{theorem}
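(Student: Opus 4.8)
The plan is to treat the value equality $v\eqref{eq:TRS}=v\eqref{eq:C1}$ and the two recovery rules separately. For the value equality I would simply close the chain of reductions already built up before the statement: \eqref{eq:C1} is \eqref{eq:12} with the redundant constraint $x\in B_n$ removed, \eqref{eq:12} equals \eqref{eq:C} because for each fixed $x\in B_n$ one has $\min\{\tfrac12 y^THy:\,y^Ty\le 1-x^Tx\}=\min\{\tfrac{\lambda_1}{2},0\}(1-x^Tx)$ by homogeneity of the quadratic form on a ball, and $v\eqref{eq:C}=v\eqref{eq:TRS}$ by Lemma \ref{le:wang17}. (A direct self-contained alternative: ``$\le$'' follows by feeding $(x^*,0)$ into \eqref{eq:C1} with $x^*$ optimal for \eqref{eq:TRS}; ``$\ge$'' follows by noting that for any feasible $(x,y)$, $\tfrac12 y^THy\ge\tfrac12\min\{\lambda_1,0\}\|y\|^2\ge\tfrac12\min\{\lambda_1,0\}(1-x^Tx)$, so the \eqref{eq:C1}-objective at $(x,y)$ dominates the \eqref{eq:C}-objective at $x\in B_n$, which is $\ge v\eqref{eq:C}=v\eqref{eq:TRS}$.)

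For part (i), if $\tilde y=0$ then $(\tilde x,0)\in B_{2n}$ gives $\tilde x\in B_n$, and the \eqref{eq:C1}-objective at $(\tilde x,0)$ is $q(\tilde x)$; since this equals $v\eqref{eq:C1}=v\eqref{eq:TRS}$, the feasible point $\tilde x$ is a global minimizer of \eqref{eq:TRS}, so $x^*=\tilde x$.

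Part (ii) is the substantive case, and the key observation is that \eqref{eq:C1} is itself an instance of \eqref{eq:TRS} in $\R^{2n}$, with Hessian $\diag(H,H)$ whose minimum eigenvalue is again $\lambda_1$. Applying Lemma \ref{le:global} to the global minimizer $(\tilde x,\tilde y)$ yields a unique multiplier $\mu^*$ with $\mu^*\ge 0$, $(H+\mu^*I)\tilde x+c=0$, $(H+\mu^*I)\tilde y=0$, $\mu^*(\tilde x^T\tilde x+\tilde y^T\tilde y-1)=0$, and $\mu^*\ge-\lambda_1$. Since $\tilde y\neq 0$ lies in $\ker(H+\mu^*I)$, we must have $\mu^*=-\lambda_1$ (hence $\lambda_1\le 0$), and then $c=-(H-\lambda_1 I)\tilde x\in{\rm Range}(H-\lambda_1 I)$, i.e.\ \eqref{eq:TRS} is in the ``hard case''. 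Next I would produce $x^*$ of the form $\tilde x+t\tilde y$: the equation $\|\tilde x+t\tilde y\|^2=1$ is the quadratic $\|\tilde y\|^2 t^2+2(\tilde x^T\tilde y)t+(\tilde x^T\tilde x-1)=0$, which has real roots because $(\tilde x^T\tilde y)^2+\|\tilde y\|^2(1-\tilde x^T\tilde x)\ge\|\tilde y\|^4>0$ (as $(\tilde x,\tilde y)\in B_{2n}$ with $\tilde y\neq 0$ forces $1-\tilde x^T\tilde x\ge\|\tilde y\|^2>0$), and taking the larger root gives precisely the $t$ appearing in \eqref{eq:ysol1}. Setting $\lambda^*:=-\lambda_1$ one checks the conditions of Lemma \ref{le:global} for this $x^*$: $(H+\lambda^*I)x^*+c=\big[(H-\lambda_1 I)\tilde x+c\big]+t(H-\lambda_1 I)\tilde y=0$, complementarity holds since $\|x^*\|=1$, $\lambda^*=-\lambda_1\ge 0$, and $\lambda^*\ge-\lambda_1$; hence $x^*$ is a global minimizer of \eqref{eq:TRS}. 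Finally, to see the subcase is exactly ``hard case (ii)'': the unique multiplier at $x^*$ is $-\lambda_1$, which excludes ``hard case (i)'', while $\tilde x$ solves $(H-\lambda_1 I)x=-c$, so $\|(H-\lambda_1 I)^\dagger c\|=\|P\tilde x\|\le\|\tilde x\|<1$, where $P$ is the orthogonal projector onto ${\rm Range}(H-\lambda_1 I)$ and $\|\tilde x\|<1$ follows from $\tilde x^T\tilde x+\tilde y^T\tilde y\le 1$ with $\tilde y\neq 0$; this strict inequality is the defining condition of ``hard case (ii)'' and simultaneously rules out ``hard case (iii)''.

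I expect the main obstacle to be this last identification of the subcase — turning $\tilde y\neq 0$ into the strict bound $\|(H-\lambda_1 I)^\dagger c\|<1$ via the observation that $\tilde x$ is merely \emph{some} solution of the stationarity system whereas the pseudoinverse picks out the minimum-norm one. Everything else should be routine once one recognizes \eqref{eq:C1} as a $2n$-dimensional trust region subproblem with unchanged minimum Hessian eigenvalue, so that Lemma \ref{le:global} transfers both the global optimality characterization and the uniqueness of the multiplier.
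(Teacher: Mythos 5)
Your proposal is correct and follows essentially the same route as the paper: view \eqref{eq:C1} as a $2n$-dimensional \eqref{eq:TRS} with unchanged minimum Hessian eigenvalue, invoke Lemma \ref{le:global} to get the multiplier system, deduce $\tilde\lambda=-\lambda_1$ from $\tilde y\neq 0$, and verify the global optimality conditions for $x^*=\tilde x+t\tilde y$ with $\|x^*\|=1$, including the strict bound $\|(H-\lambda_1 I)^\dagger c\|\le\|\tilde x\|<1$ that pins down ``hard case (ii)''. The only cosmetic differences are that you dispatch case (i) via the value equality rather than the KKT reduction, and you spell out the discriminant argument that the paper leaves as ``we can also verify that $\|x^*\|=1$''.
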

\begin{proof}
According to the above derivation of \eqref{eq:C1}, it is sufficient to prove the second part on recovering the optimal solution of \eqref{eq:TRS}.
	According to Lemma \ref{le:global}, $x^*$ is a global minimizer of \eqref{eq:TRS} if and only if there exists $\lambda^*\ge-\lambda_1$ such that \eqref{eq:stationary}-\eqref{eq:com} holds.
Since \eqref{eq:C1} is a \eqref{eq:TRS} in $\R^{2n}$, for a global minimizer of \eqref{eq:C1}, denoted by $(\tilde x, \tilde y)$, there exists $\tilde\lambda\ge\max\{-\lambda_1,0\}$ such that
	\begin{eqnarray}
		&&(H+\tilde\lambda I) \tilde x+c=0,\ (H+\tilde\lambda I) \tilde y=0,\label{eq:C1y_gra}\\
		&&\tilde\lambda(\tilde x^{T}\tilde x+\tilde y^{T}\tilde y-1)=0,\ \tilde x^{T}\tilde x+\tilde y^{T}\tilde y-1\le0.\label{eq:C1_com}
	\end{eqnarray}
If $\tilde y=0$, set $x^*=\tilde x$ and $\lambda^*=\tilde \lambda$, then \eqref{eq:C1y_gra}-\eqref{eq:C1_com} reduce to \eqref{eq:stationary}-\eqref{eq:com} with $\lambda^*\ge\max\{-\lambda_1,0\}$. Hence $\tilde x$ is a global minimizer of \eqref{eq:TRS}.

If $\tilde y\neq 0$, by \eqref{eq:C1y_gra}, we have $\tilde\lambda=-\lambda_1$ and 
then $(H-\lambda_1 I) \tilde x+c=0$. It follows that
	$${\|(H-\lambda_1 I)^\dagger c\|}^2\le\tilde x^T\tilde x <1,$$
where the last inequality holds since $(\tilde x,\tilde y)\in  B_{2n}$  and $\tilde y\neq 0$.
	 Therefore,  ``hard case (ii)'' defined in Table \ref{tab:tab1} holds.
With $x^*$ defined in \eqref{eq:ysol1} and $\lambda^*=\tilde\lambda$, \eqref{eq:stationary} holds by \eqref{eq:ysol1}-\eqref{eq:C1y_gra}. We can also verify that $\|x^*\|=1$ so  that \eqref{eq:com} holds.  Therefore, according to Lemma \ref{le:global}, $x^*$ defined in \eqref{eq:ysol1} globally solves \eqref{eq:TRS}.
\end{proof}

\begin{rem}
The following standard semidefinite programming (SDP) relaxation for \eqref{eq:TRS},
\[\begin{array}{cl}
			\min  & \frac{1}{2} {\rm tr}(HX)+c^Tx\\
			\ST  &  {\rm tr}(X) \le1,\\
			& X-xx^T \succeq0,
		\end{array}
	\]
is tight, see \cite{Polik07,Xia2020} and references therein. It follows that the above SDP relaxation always has a rank-one solution, that is, $X=xx^T$ holds at an optimal solution. So we can add the redundant rank constraint
\[
{\rm rank}(X-xx^T)\le 1 ~\Longleftrightarrow~X=xx^T+yy^T~({\rm as}~X-xx^T \succeq0)
\]
to the above SDP relaxation and then rebuild our new reformulation  \eqref{eq:C1}.
\end{rem}

\begin{rem}
As a direct corollary of the main result in \cite{Beck06},
\eqref{eq:TRS} is  equivalent to its complex relaxation:
\begin{equation}\label{eq:comp}
	\begin{array}{cl}
		\min  \{\frac{1}{2}z^HHz+\mathcal{R}(c^Hz):\, z^Hz\le1,~z\in \mathbb{C}^n\},
	\end{array}
\end{equation}
where  $\mathbb{C}^n$ is the $n$-dimensional complex vector space, $\mathcal{R}(\cdot)$ (resp., $\mathcal{I}(\cdot)$) denotes the real  (resp., image) part of $(\cdot)$.
By introducing  $x=\mathcal{R}(z)$ and  $y=\mathcal{I}(z)$, we recover
the new reformulation \eqref{eq:C1} from \eqref{eq:comp}.
\end{rem}

Though \eqref{eq:C1} remains nonconvex if \eqref{eq:TRS} is, it enjoys a nice property that \eqref{eq:TRS} does not have.
We call $(x^*,y^*)$ a second-order stationary point of \eqref{eq:C1} if the standard second-order necessary optimality condition \cite{Fletcher1987,Luenberger1984} holds at the stationary point  $(x^*,y^*)$.

\begin{proposition}\label{pr:ne}
Any second-order stationary point of \eqref{eq:C1} is globally optimal. It follows that   \eqref{eq:C1} has no local non-global minimizer.
\end{proposition}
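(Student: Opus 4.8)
The key point is that \eqref{eq:C1} is itself an instance of \eqref{eq:TRS} in $\R^{2n}$, with Hessian $\widetilde H=\diag(H,H)$ and linear term $\widetilde c=(c,0)$, and that $\lambda_{\min}(\widetilde H)=\lambda_1$. Hence Lemma~\ref{le:global}, applied to \eqref{eq:C1}, says that a point $w^*=(x^*,y^*)$ is a global minimizer of \eqref{eq:C1} if and only if it is a KKT point of \eqref{eq:C1} whose multiplier $\lambda^*$ satisfies $\lambda^*\ge-\lambda_1$. So it suffices to prove that every second-order stationary point $(x^*,y^*)$ of \eqref{eq:C1}, with its associated KKT multiplier $\lambda^*\ge 0$ (which is unique: it is forced to be $0$ when $\|w^*\|<1$, and is determined by $(\widetilde H+\lambda^*I)w^*=-\widetilde c$ when $\|w^*\|=1$ since $w^*\neq 0$), satisfies $\lambda^*\ge-\lambda_1$. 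I would argue by contradiction: assume $\lambda^*<-\lambda_1$; since $\lambda^*\ge 0$ this forces $\lambda_1<0$ and $\lambda_1+\lambda^*<0$.

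The structural observation that does the work is that the eigenspace $V$ of $\widetilde H=\diag(H,H)$ for its smallest eigenvalue $\lambda_1$ equals $E_1\times E_1$, where $E_1$ is the $\lambda_1$-eigenspace of $H$; in particular $\dim V\ge 2$, and on $V$ the matrix $\widetilde H+\lambda^* I$ acts as the negative scalar $(\lambda_1+\lambda^*)I$. (This ``dimension doubling'' is precisely the property that \eqref{eq:TRS} itself lacks, and it is why \eqref{eq:C1} cannot have a local non-global minimizer while \eqref{eq:TRS} can.)

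I would then split according to whether the ball multiplier is positive. \emph{Case $\lambda^*>0$:} complementary slackness gives $\|x^*\|^2+\|y^*\|^2=1$, so $w^*\neq 0$, and the second-order necessary condition at $w^*$ asserts $d^{\TT}(\widetilde H+\lambda^* I)d\ge 0$ for every $d$ in the critical cone, which for an active ball constraint with positive multiplier is the hyperplane $\{d:\ (w^*)^{\TT}d=0\}$. Since $\dim V\ge 2>1$, the intersection $V\cap\{w^*\}^{\perp}$ contains some $d\neq 0$, and for it $d^{\TT}(\widetilde H+\lambda^* I)d=(\lambda_1+\lambda^*)\|d\|^2<0$, a contradiction. \emph{Case $\lambda^*=0$:} stationarity reads $\widetilde H w^*+\widetilde c=0$; if $\|w^*\|<1$ then $w^*$ is an unconstrained local minimizer of the quadratic, whence $\widetilde H\succeq 0$, contradicting $\lambda_1<0$; if $\|w^*\|=1$ the gradient vanishes and the critical cone degenerates to the half-space $\{d:\ (w^*)^{\TT}d\le 0\}$, on which the second-order necessary condition gives $d^{\TT}\widetilde H d\ge 0$, and since $d\mapsto d^{\TT}\widetilde H d$ is even this forces $\widetilde H\succeq 0$, again contradicting $\lambda_1<0$. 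In all cases we reach a contradiction, so $\lambda^*\ge-\lambda_1$ and $(x^*,y^*)$ is globally optimal. Since every local minimizer is a second-order stationary point, \eqref{eq:C1} has no local non-global minimizer.

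The part I expect to require the most care is getting the second-order necessary optimality condition and its critical cone exactly right for the ball constraint — in particular the non-strictly-complementary situation $\lambda^*=0$ with $\|w^*\|=1$, where the critical cone is only a half-space and one must invoke the evenness of $d^{\TT}\widetilde H d$ — together with the elementary but essential dimension count $\dim V\ge 2$, which is what makes a $\ge 2$-dimensional negative subspace of $\widetilde H+\lambda^*I$ impossible to fit inside the hyperplane $\{w^*\}^{\perp}$.
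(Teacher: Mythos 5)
Your proof is correct and follows essentially the same route as the paper: both arguments reduce the claim to showing that the second-order necessary condition forces $H+\lambda^*I\succeq 0$ (equivalently $\lambda^*\ge-\lambda_1$, so Lemma~\ref{le:global} applies), by exploiting the fact that any negative eigendirection $w$ of $H+\lambda^*I$ doubles into a two-dimensional negative subspace $\mathrm{span}\{(w,0),(0,w)\}$ of $\diag(H,H)+\lambda^*I$, which cannot be avoided by the hyperplane orthogonal to $(x^*,y^*)$. The only differences are presentational: where you argue by a dimension count (and treat the degenerate half-space critical cone when $\lambda^*=0$ and $\|w^*\|=1$ via evenness of the quadratic form), the paper explicitly exhibits the test direction $(u,v)=(-(w^Ty^*)w,\,(w^Tx^*)w)$ inside the tangent hyperplane, which handles all active-constraint subcases uniformly.
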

\begin{proof}
Let $(x^*,y^*)$ be a second-order stationary point of \eqref{eq:C1} associated with the KKT multiplier $\lambda^*/2$. If $x^{*T}x^*+ y^{*T}y^*<1$, then by \eqref{eq:com} we have $\lambda^*=0$, and  the second-order necessary optimality condition reduces to  $H\succeq 0$. According to Lemma \ref{le:global}, $(x^*,y^*)$ is a global minimizer of \eqref{eq:C1}. Now we consider the case $x^{*T}x^*+ y^{*T}y^*=1$. According to the standard  second-order necessary optimality condition, we have
\begin{equation}
u^T(H+\lambda^* I)u+v^T(H+\lambda^* I)v\ge0,~\ \forall (u,v)\neq(0,0):\ u^Tx^*+v^Ty^*=0. \label{uv}
\end{equation}
We claim that $H+\lambda^* I\succeq 0$. Suppose, on the contrary,  $H+\lambda^* I$ has a negative eigenvalue associated with the eigenvector $w\neq0$.
If $w^Tx^*=w^Ty^*=0$ (resp. $(w^Tx^*)^2+(w^Ty^*)^2\neq0$), setting $u=v=w$ (resp. $u=-(w^Ty^*)w$ and $v=(w^Tx^*)w$) in \eqref{uv} yields a contradiction.
\end{proof}

\subsection{Projected gradient method for globally solving \eqref{eq:C1}} Let
\begin{equation}
A=
	\left[\begin{array}{cc}
     H& 0\\ 0&H
	\end{array}\right], ~a=
	\left[\begin{array}{c}
     c\\ 0
	\end{array}\right] \in\R^{2n}. \label{Aa}
\end{equation}
We can rewrite \eqref{eq:C1}  as the following \eqref{eq:TRS} in $\R^{2n}$:
\begin{equation}\label{eq:z}\tag{D$'$}
	\begin{array}{cl}
		\min \{f(z)=\frac{1}{2}z^TAz+a^Tz:\,  z\in B_{2n}\}.
	\end{array}
\end{equation}
Employing \eqref{eq:pro} to solve \eqref{eq:z} gives the following iterative formula:
\begin{equation}\label{eq:pro1}\tag{PG2}
	z^{k+1}=P_{B_{2n}}(z^k-\eta\nabla f(z^k))=\left\{
	\begin{aligned}
		&z^k-\eta\nabla f(z^k),&&\ {\rm if}\   \|z^k-\eta\nabla f(z^k)\|\le1\\
		&\frac{z^k-\eta\nabla f(z^k)}{\|z^k-\eta\nabla f(z^k)\|},&&\ {\rm otherwise},
	\end{aligned}
	\right.
\end{equation}
where $\eta\in(0, 2/L)$ is the constant step size, $L=\|A\|_2=\|H\|_2$ is the same constant as that in \eqref{eq:pro}. The following convergence result on \eqref{eq:pro1} has been established in the literature.
\begin{lemma}{\rm(\cite[Theorem 10.15]{Beck2017First}, \cite[Theorem 4.5]{Beck2018})}\label{le:conv}
	Let $\{z^k\}_{k=0}^{\infty}$ be a sequence generated by \eqref{eq:pro1} with $\eta\in(0, 2/L)$. Then $\{z^k\}_{k=0}^{\infty}$ converges to a stationary point of \eqref{eq:z}.
\end{lemma}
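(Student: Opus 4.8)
The plan is to combine the classical sufficient-decrease analysis of the projected gradient iteration with a Kurdyka--Łojasiewicz (KL) argument, the latter being needed to promote subsequential convergence to convergence of the whole sequence. First I would record the basic descent estimate. Since $f$ is quadratic, $\nabla f(z)=Az+a$ is globally Lipschitz with constant $L=\|A\|_2$, so the descent lemma gives $f(z^{k+1})\le f(z^k)+\langle\nabla f(z^k),z^{k+1}-z^k\rangle+\tfrac{L}{2}\|z^{k+1}-z^k\|^2$. On the other hand, $z^{k+1}=P_{B_{2n}}(z^k-\eta\nabla f(z^k))$ is the minimizer over $B_{2n}$ of $z\mapsto\langle\nabla f(z^k),z-z^k\rangle+\tfrac1{2\eta}\|z-z^k\|^2$, and the first-order projection inequality tested at $z=z^k$ yields $\langle\nabla f(z^k),z^{k+1}-z^k\rangle\le-\tfrac1{\eta}\|z^{k+1}-z^k\|^2$. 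Combining the two gives $f(z^{k+1})\le f(z^k)-\gamma\|z^{k+1}-z^k\|^2$ with $\gamma=\tfrac1{\eta}-\tfrac L2>0$, since $\eta\in(0,2/L)$.

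Next I would extract the standard consequences. Because $B_{2n}$ is compact and $f$ is continuous, $f$ is bounded below on $B_{2n}$, so the nonincreasing sequence $\{f(z^k)\}$ converges; telescoping the descent inequality then gives $\sum_{k\ge0}\|z^{k+1}-z^k\|^2<\infty$, hence $\|z^{k+1}-z^k\|\to0$. Compactness of $B_{2n}$ furnishes at least one accumulation point $z^*$; passing to the limit in $z^{k+1}=P_{B_{2n}}(z^k-\eta\nabla f(z^k))$ along the relevant subsequence and using $\|z^{k+1}-z^k\|\to0$ together with continuity of $P_{B_{2n}}(\,\cdot-\eta\nabla f(\cdot)\,)$ shows $z^*=P_{B_{2n}}(z^*-\eta\nabla f(z^*))$, i.e. $z^*$ is a stationary point of \eqref{eq:z}. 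This already establishes the statement up to the nontrivial point that the \emph{whole} sequence converges rather than merely a subsequence.

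The main obstacle, and the heart of the proof, is exactly that upgrade, because the stationary set of \eqref{eq:z} need not be finite or isolated: in the ``hard case'' it can contain an entire sphere of points, so standard subsequential arguments do not suffice. I would handle this with the abstract KL machinery. Writing $F=f+\delta_{B_{2n}}$, the function $F$ is semialgebraic (a quadratic plus the indicator of a ball), hence a KL function with bounded level sets. The iteration satisfies the two hypotheses of the Attouch--Bolte--Svaiter type convergence scheme: the sufficient decrease $F(z^k)-F(z^{k+1})\ge\gamma\|z^{k+1}-z^k\|^2$ proved above, and a relative-error bound, obtained from the optimality relation $0\in\nabla f(z^k)+\tfrac1\eta(z^{k+1}-z^k)+\partial\delta_{B_{2n}}(z^{k+1})$, which yields $\nabla f(z^{k+1})-\nabla f(z^k)-\tfrac1\eta(z^{k+1}-z^k)\in\partial F(z^{k+1})$, an element of norm at most $(L+\tfrac1\eta)\|z^{k+1}-z^k\|$. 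Since $\{z^k\}\subset B_{2n}$ is bounded, the KL convergence theorem then gives finite length, $\sum_{k\ge0}\|z^{k+1}-z^k\|<\infty$, so $\{z^k\}$ is Cauchy and converges to a single limit, which by the previous paragraph is a stationary point of \eqref{eq:z}. An alternative route, closer in spirit to \cite{Beck2018}, would be to exploit the explicit description of TRS stationary points to rule out oscillation between distinct limits directly; but the KL argument is the most transparent and is robust to the possible nonconvexity of the reformulation.
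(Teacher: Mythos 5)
Your proposal is correct. Note, however, that the paper does not prove this lemma at all: it is quoted from \cite{Beck2017First} and \cite{Beck2018}, so the comparison is really with the argument in those sources. Your first two paragraphs (sufficient decrease via the variational inequality of the projection tested at $z=z^k$, giving the constant $\gamma=\tfrac1\eta-\tfrac L2>0$ on the full range $\eta\in(0,2/L)$; summability of $\|z^{k+1}-z^k\|^2$; subsequential convergence to a fixed point of the projected gradient map) coincide with the standard analysis underlying the cited results. Where you genuinely diverge is in the upgrade to whole-sequence convergence: you invoke the Kurdyka--\L ojasiewicz machinery for the semialgebraic function $f+\delta_{B_{2n}}$, with the relative-error bound $\|\nabla f(z^{k+1})-\nabla f(z^k)-\tfrac1\eta(z^{k+1}-z^k)\|\le(L+\tfrac1\eta)\|z^{k+1}-z^k\|$, which is a valid and complete argument. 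The route taken in \cite{Beck2018} (and mirrored by this paper's own Lemma \ref{le:decrease} for the sphere-constrained analogue) is instead structural: one shows that any two accumulation points are stationary with equal objective value, hence share the same KKT multiplier, and then uses the eigendecomposition of $A$ to identify their coordinates componentwise, forcing uniqueness of the limit. Your KL argument is more abstract and transferable to other composite problems but leans on the Attouch--Bolte--Svaiter theorem as a black box; the structural argument is elementary and self-contained but exploits the specific form of TRS stationary points. Either is acceptable here; you correctly identified that the only nontrivial issue is that the stationary set can be a continuum in the hard case, which is precisely why some such device is needed.
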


Surprisingly, we can further prove that the sequence generated by \eqref{eq:pro1} almost always converges to a global minimizer of \eqref{eq:z}.
\begin{theorem}\label{th:p1}
For any $\eta\in(0, 2/L)$, let $\{z^k\}_{k=0}^{\infty}$ be a sequence generated by \eqref{eq:pro1} with $z^0$ being uniformly and randomly selected from $B_{2n}$.  Then $\{z^k\}_{k=0}^{\infty}$  converges to a global minimizer of \eqref{eq:z} with probability one.
\end{theorem}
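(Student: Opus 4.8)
The plan is to combine the general stable-manifold theorem for first-order methods with the structural fact established in Proposition~\ref{pr:ne}. The method \eqref{eq:pro1} is the projected gradient map $z \mapsto T(z) := P_{B_{2n}}(z - \eta\nabla f(z))$, and for $\eta\in(0,2/L)$ this map is a well-defined $C^1$ (indeed locally Lipschitz, and smooth away from the sphere-activation boundary) self-map of $B_{2n}$ whose fixed points are exactly the stationary points of \eqref{eq:z}. By Lemma~\ref{le:conv} every trajectory converges to some stationary point, so it suffices to show that the set of initial points $z^0$ whose trajectory converges to a \emph{non-global} stationary point has Lebesgue measure zero in $B_{2n}$; combined with Proposition~\ref{pr:ne}, any stationary point that is not globally optimal fails the second-order necessary condition, i.e.\ it is a \emph{strict saddle} of the map $T$ in the sense that $DT$ at that point has an eigenvalue of modulus $>1$.

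The key steps, in order, are as follows. First, classify the stationary points of \eqref{eq:z}: a point $z^*$ with $\|z^*\|<1$ forces $\nabla f(z^*)=0$ and the interior Hessian is $A$; a point with $\|z^*\|=1$ has an associated multiplier $\lambda^*\ge 0$ with $(A+\lambda^* I)z^*=-a$, and the relevant reduced Hessian is $A+\lambda^* I$ restricted to the tangent space $\{w:\,w^Tz^*=0\}$. Second, compute $DT$ at each such fixed point: in the interior region $DT = I-\eta A$, whose spectral radius exceeds $1$ precisely when $A$ has an eigenvalue $\lambda$ with $\lambda<0$ or $\lambda>2/\eta$, but the latter is excluded by $\eta<2/L$; on the active-constraint region one differentiates the normalization map and obtains that $DT$ restricted to the tangent direction is conjugate to $\frac{1}{\|z^*-\eta\nabla f(z^*)\|}\big(I-\eta(A+\lambda^* I)\big)$ acting on $\{w:\,w^Tz^*=0\}$ — again $\eta<2/L$ kills the large-eigenvalue side, so an eigenvalue of modulus $>1$ appears iff the reduced Hessian $A+\lambda^*I$ has a negative eigenvalue in the tangent space. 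Third, invoke Proposition~\ref{pr:ne}: every non-global stationary point violates \eqref{uv}, hence (by the computation just described) is an unstable fixed point of $T$ with $\dim E^u\ge 1$. Fourth, apply the center-stable manifold theorem (e.g.\ the Lee--Simchowitz--Jordan--Recht argument, or \cite[Theorem~4.1]{Beck2018} adapted to this reformulation): around each such saddle the set of points converging to it lies in a $C^1$ manifold of codimension $\ge 1$, hence is null; since there are only finitely many stationary points of a \eqref{eq:TRS} — or, if infinitely many occur (the degenerate multiplicity case), one covers the stationary set by countably many such local center-stable charts — a countable union of null sets is null. Therefore the trajectory avoids all non-global stationary points for a.e.\ $z^0$, and by Lemma~\ref{le:conv} it converges to a global minimizer with probability one.

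The main obstacle is the non-smoothness of $T$ across the boundary hypersurface $\{z:\,\|z-\eta\nabla f(z)\|=1\}$ where the projection switches between the identity branch and the normalization branch: the stable-manifold machinery requires $T$ to be a local $C^1$ diffeomorphism near the fixed point, and one must check this separately depending on whether the saddle sits in the interior, strictly on the active region, or exactly on that switching surface. In the first two cases $T$ is smooth in a neighborhood and invertibility follows from $\eta<2/L$ (so $I-\eta A$ is invertible and the normalization Jacobian is nonsingular). The genuinely delicate case is a saddle with $\|z^*\|=1$ and multiplier $\lambda^*=0$, where the two branches meet; here one argues as in \cite{Beck2018} that $T$ is still Lipschitz-invertible with the required spectral structure, or restricts attention to one branch using that $\lambda^*=0$ with $\|z^*\|=1$ already forces $\nabla f(z^*)=0$, so locally $T$ coincides with the interior map. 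A secondary point requiring care is that the domain $B_{2n}$ is a manifold with boundary, so the center-stable manifold statement must be used in the form valid for such maps; I expect this to be routine given the references already cited, and the whole argument then closes as above.
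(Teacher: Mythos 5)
Your strategy (strict-saddle avoidance via a center-stable manifold theorem) is genuinely different from the paper's proof, but as written it has gaps that are not merely routine. The most serious one is that the iteration map $T(z)=P_{B_{2n}}(z-\eta\nabla f(z))$ has a \emph{singular} Jacobian at every point where the normalization branch is active: differentiating $s\mapsto s/\|s\|$ produces the factor $\frac{1}{\|s\|}\left(I-\frac{ss^T}{\|s\|^2}\right)$, which has rank $2n-1$. Consequently $T$ maps open sets into the null set $\partial B_{2n}$, and the step ``the preimage under $T$ of a null set is null'' --- which is what upgrades the \emph{local} center-stable manifold to the global measure-zero statement about $\bigcup_k T^{-k}(W^{cs}_{\mathrm{loc}})$ --- simply fails in the ambient space. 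The Lee--Simchowitz--Jordan--Recht theorem is stated for maps with everywhere nonsingular differential; to invoke it here you would have to pass to the induced dynamics on the sphere (or otherwise quotient out the collapsed radial direction), which is precisely the part you have not supplied. A second, independent problem: your claim that $\eta<2/L$ makes $I-\eta A$ invertible is false; for any $\eta\in[1/L,2/L)$ such that $1/\eta$ is an eigenvalue of $A$ (e.g.\ $\eta=1/L$ when $\lambda_{\max}(A)=L$), $I-\eta A$ is singular, which breaks the preimage argument even on the interior branch. Third, in the delicate case $\|z^*\|=1$, $\lambda^*=0$ you assert that $T$ locally coincides with the interior map because $\nabla f(z^*)=0$; that is not so --- every neighborhood of such a $z^*$ contains points handled by both branches, so $T$ is only Lipschitz there and the $C^1$ hypothesis of the manifold theorem is unavailable. (Also, the result of Beck and Vaisbourd you cite is itself proved by a direct tracking argument, not by a stable-manifold theorem, so it cannot be ``adapted'' in the way you suggest.)

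For comparison, the paper sidesteps all of this with an elementary argument exploiting the specific structure of \eqref{eq:z}: because $A$ is the block-doubling of $H$, the bottom eigenspace of $A+\bar\lambda I$ has dimension at least $2$, so it meets the hyperplane $\{v:\,v^T(A+\bar\lambda I)^\dagger a=0\}$ nontrivially; choosing $v_{\bar\lambda}$ in this intersection forces $v_{\bar\lambda}^Ta=0$, so the scalar sequence $v_{\bar\lambda}^Tz^k$ obeys the \emph{exact} homogeneous recursion \eqref{eq:13} with asymptotic ratio greater than one --- no linearization error and no smoothness of $P_{B_{2n}}$ is needed --- which forces $v_{\bar\lambda}^Tz^0=0$ for convergence to $\bar z$. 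Finiteness of the set of KKT multipliers then confines the bad initial points to finitely many hyperplanes. Your linearization computation (the unstable multiplier $\bigl(1-\eta\mu+\eta\lambda^*\bigr)/\bigl(1+\eta\lambda^*\bigr)$ for a negative tangent eigenvalue $\mu$) is consistent with \eqref{eq:15}, and your use of Proposition \ref{pr:ne} to certify a tangent direction of negative curvature is the right structural input; but to close your route you would need either to repair the three points above or to replace the manifold theorem by a direct tracking argument of the paper's type.
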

\begin{proof}
According to Lemma \ref{le:conv}, for any initial point $z^0\in B_n$, $\{z^k\}_{k=0}^{\infty}$ generated by \eqref{eq:pro1} converges to a stationary point of \eqref{eq:z}.
When $\lambda_1\ge0$, \eqref{eq:z} is a convex optimization problem, any stationary point is a global minimizer. We assume $\lambda_1<0$.
The following proof consists of two parts. First, we  prove that 
$\{z^k\}_{k=0}^{\infty}$ converges to a non-globally-minimal stationary point $\bar z$ only if the initial point $z^0$ is included in a zero measure set related to $\bar\lambda$, where $\bar\lambda/2$ is the KKT multiplier associated with $\bar z$.
Second, we prove that \eqref{eq:z} has only a finite  number of KKT multipliers, which will complete the proof
since the measure of the union of a finite number of zero measure sets remains zero.

{\bf Proof of the first part.} Since $(\bar z, \bar\lambda/2)$ is a KKT point of \eqref{eq:z}, and $\bar z\in B_{2n}$ is not a global minimizer, according to Lemma \ref{le:global}, it holds that
	\begin{eqnarray}
		&&\bar\lambda\ge0,\label{barLambda0}\\
        && (A+\bar\lambda I)\bar z+a=0,\label{eq:10}\\
		&& A+\bar\lambda I \nsucceq 0,\label{eq:101}\\
		&& \bar\lambda(\bar z^T\bar z-1)=0.\label{eq:1011}
	\end{eqnarray}
The minimum eigenvalue of $A+\bar\lambda I$ reads as
$\tilde\lambda_1=\lambda_1+\bar\lambda$.
It follows from \eqref{eq:101} that
\begin{equation}
\tilde\lambda_1<0. \label{eq:102}
\end{equation}
Define  two subspaces
\begin{eqnarray}
V_1&=&\{v\in\R^{2n}:~ (A+\bar\lambda I)v=\tilde\lambda_1v\},\label{eq:V1}\\
V_2&=&\{v\in\R^{2n}:~ v^T{(A+\bar\lambda I)}^\dagger a=0\},\label{eq:V2}
\end{eqnarray}
where $V_1$ is the eigenspace of $A+\bar\lambda I$ associated with the minimum eigenvalue $\tilde\lambda_1$, and $V_2$ is a hyperplane if ${(A+\bar\lambda I)}^\dagger a\neq 0$ and the whole space otherwise. Clearly, the dimension of $V_2$ is at least $2n-1$.
By the definition of $A$ \eqref{Aa}, the dimension of $V_1$ is at least two.
Since $(2n-1)+2>2n$,  the intersection of $V_1$ and $V_2$ is a nontrivial subspace,  and hence
there exists a nonzero vector $v_{\bar\lambda}$ such that $v_{\bar\lambda}\in V_1\cap V_2$.
Note that given the data in \eqref{eq:TRS}, $v_{\bar\lambda}$  depends only on $\bar\lambda$. 
We can further verify that
\[
\begin{array}{lll}
{v_{\bar\lambda}}^T\bar z&={v_{\bar\lambda}}^T[\bar z+{(A+\bar\lambda I)}^\dagger a]&({\rm as}~v_{\bar\lambda}\in  V_2)\\
&=[(A+\bar\lambda I)v_{\bar\lambda}]^T[\bar z+{(A+\bar\lambda I)}^\dagger a]/\tilde\lambda_1&({\rm as}~v_{\bar\lambda}\in  V_1~{\rm and}~\tilde\lambda_1\neq 0~\eqref{eq:102})\\
&={v_{\bar\lambda}}^T[(A+\bar\lambda I)\bar z+a]/\tilde\lambda_1&({\rm as}~a\in {\rm Range}(A+\bar\lambda I)~{\rm by}~\eqref{eq:10})\\
&=0.&({\rm by}~\eqref{eq:10})
\end{array}
\]
In sum,  we obtain
\begin{eqnarray}
v_{\bar\lambda}\neq 0,\ (A+\bar\lambda I)v_{\bar\lambda}=\tilde\lambda_1v_{\bar\lambda},~v_{\bar\lambda}^T\bar z=0.
\label{eq:v1}
\end{eqnarray}
We claim that the sequence $\{z^k\}_{k=0}^{\infty}$ generated by \eqref{eq:pro1} converges to $\bar z$ only if the initial point $z^0$ satisfies $v_{\bar\lambda}^T z^0=0$.
Suppose on the contrary that
\begin{equation}
v_{\bar\lambda}^T z^0\neq0. \label{vz0}
\end{equation}
We first rewrite the iteration formulation as
\begin{equation}
z^{k+1}=\frac{z^k-\eta(Az^k+a)}{r^k}, ~k = 0, 1, 2,\cdots, \label{zk}
\end{equation}
where $r^k:=\max\{\|z^k-\eta(Az^k+a)\|, 1\}$. According to \eqref{eq:v1}, we have
	\begin{equation}\label{eq:11}
		v_{\bar\lambda}^Ta=\tilde\lambda_1v_{\bar\lambda}^T\bar z+v_{\bar\lambda}^Ta=v_{\bar\lambda}^T(A+\bar\lambda I)\bar z+v_{\bar\lambda}^Ta=v_{\bar\lambda}^T[(A+\bar\lambda I)\bar z+a]=0,
	\end{equation}
where the last equality holds due to \eqref{eq:10}.
Multiplying both sides of \eqref{zk} with $v_{\bar\lambda}^T $ from left yields that
	\begin{eqnarray}
		v_{\bar\lambda}^T z^{k+1}&=&\frac{v_{\bar\lambda}^Tz^k-\eta(v_{\bar\lambda}^TAz^k+v_{\bar\lambda}^Ta)}{r^k}\nonumber\\
		&=&\frac{v_{\bar\lambda}^Tz^k-\eta[v_{\bar\lambda}^T(A+\bar\lambda I)z^k-\bar\lambda v_{\bar\lambda}^Tz^k]}{r^k}\label{eq:sec}\\
		&=&\frac{v_{\bar\lambda}^Tz^k-\eta[\tilde\lambda_1v_{\bar\lambda}^Tz^k-\bar\lambda v_{\bar\lambda}^Tz^k]}{r^k}\nonumber\\
		&=&\frac{(1-\eta\tilde\lambda_1+\eta\bar\lambda)v_{\bar\lambda}^Tz^k}{r^k},\label{eq:13}		
	\end{eqnarray}
	where  \eqref{eq:sec}  follows from \eqref{eq:11}. We also have
\begin{eqnarray}
		{\rm lim}_{k\rightarrow\infty}r^k&={\rm lim}_{k\rightarrow\infty}\max\{\|z^k-\eta(Az^k+a)\|, 1\}\nonumber\\
		&=\max\{\|\bar z-\eta(A\bar z+a)\|, 1\}\nonumber\\
		&=\max\{(1+\eta\bar\lambda)\|\bar z\|, 1\},\label{eq:14}
\end{eqnarray}
	where \eqref{eq:14} is due to \eqref{eq:10}.
By \eqref{barLambda0}, \eqref{eq:102} and $\eta>0$, we have $1-\eta\tilde\lambda_1+\eta\bar\lambda>1$.
According to  \eqref{eq:13}, the sign of $v_{\bar\lambda}^T z^k$ remains unchanged for all $k$. It follows from \eqref{vz0} that $v_{\bar\lambda}^T z^k\neq0$ for $k=0,1,2,\cdots$.
	By bringing \eqref{eq:14} into \eqref{eq:13}, we have
	\begin{equation}{\rm lim}_{k\rightarrow\infty}\frac{v_{\bar\lambda}^T z^{k+1}}{v_{\bar\lambda}^Tz^k}=\frac{(1-\eta\tilde\lambda_1+\eta\bar\lambda)}{\max\{(1+\eta\bar\lambda)\|\bar z\|, 1\}}>1,\label{eq:15}
	\end{equation}
	where the inequality follows from $\eta>0$, \eqref{barLambda0}, \eqref{eq:102} and $\bar z\in B_{2n}$. Therefore, under the assumption \eqref{vz0}, it holds that ${\rm lim}_{k\rightarrow\infty}v_{\bar\lambda}^T z^k\neq0$, which contradicts the last equality in \eqref{eq:v1}. We conclude that $\{z^k\}_{k=0}^{\infty}$ converges to $\bar z$ only if $v_{\bar\lambda}^T z^0=0$.
	
{\bf Proof of the second part.} We show that \eqref{eq:z} has only a finite number of KKT multipliers.
Let $A=U\Lambda U^T$ be an eigenvalue decomposition of  $A$, where $\Lambda=\diag(\lambda_1,\lambda_1,\lambda_2,\lambda_2, \cdots, \lambda_n,\lambda_n)$ with $\lambda_1\le\cdots\le\lambda_n$ and $U=[u_1,\cdots,u_{2n}]\in\R^{(2n)\times (2n)}$ is orthonormal. Suppose that $(\bar z, \bar\lambda/2)$ is a KKT point of \eqref{eq:z} and  $\bar z$ is not a global  minimizer of \eqref{eq:z}.
Then we have \eqref{barLambda0}-\eqref{eq:1011}.
According to Lemma \ref{le:global}, there are three possible cases for $\bar\lambda$:
	\begin{itemize}
	 \item[(a)] $\bar\lambda=0$;
	 \item[(b)] $\bar\lambda=-\lambda_i$ for some $i\in\{2, \cdots, n\}$;
	 \item[(c)] $\bar\lambda\neq-\lambda_i$ for all $i\in\{1, \cdots, n\}$, and
	 \begin{equation}\label{eq:equation}
	 {\|{(A+\bar\lambda I)}^{-1}a\|}^2=1.
	 \end{equation}
Equivalently, $\bar\lambda$ is a zero point of
	 \begin{equation}\label{eq:se} \phi(\lambda)=\sum_{i=1}^n\frac{( u_{2i-1}^Ta)^2+( u_{2i}^Ta)^2}{{(\lambda_i+\lambda)}^2}-1.
	\end{equation}
\end{itemize}
It is proved in \cite{Mart94} that $\phi(\lambda)$ defined in \eqref{eq:se} is strictly convex in $(-\lambda_{i+1}, -\lambda_{i})$ for $i=1, \cdots, n-1$, and is monotone in $(-\lambda_1, +\infty)$ and $(-\infty, -\lambda_n)$, respectively. Thus,  there are at most $n-1$ and $2n-1$ KKT multipliers smaller than $-\lambda_1$ for cases (b) and (c), respectively.
In sum, \eqref{eq:z} has at most $3n-1$  KKT multipliers corresponding to non-globally-minimal stationary points.
Denote by  $M$  the set of all these KKT multipliers.
Let
\begin{equation}
Z=\bigcup_{\bar\lambda/2\in  M }\{z\in\R^{2n}:~ z^Tv_{\bar\lambda}=0\},\label{ZZ}
\end{equation}
where
$v_{\bar\lambda}$ is a nonzero point in $V_1\cap V_2$ defined in \eqref{eq:V1}-\eqref{eq:V2}.
According to the proof of the first part,
as long as $z^0\not\in Z$, $\{z^k\}_{k=0}^{\infty}$ generated by \eqref{eq:pro1} does not converge to $\bar z$ associated with any KKT multiplier $\bar\lambda/2\in M$.
That is, the convergence point must be a global minimizer.  We complete the proof by noting that the measure of the set $Z$ \eqref{ZZ} is zero.
\end{proof}

\subsection{Detailed algorithm and local convergence rate}\label{sec:li}
To globally solve \eqref{eq:TRS}, we first employ \eqref{eq:pro1} to solve \eqref{eq:z}, and then recover the global minimizer of \eqref{eq:TRS} according to  Theorem \ref{th:reformu}. For completeness, we summarize the new one-stage single-start algorithm in Algorithm \ref{al:1}.
\begin{algorithm}
	\caption{Framework of globally solving \eqref{eq:TRS}.}\label{al:1}
	\begin{algorithmic}[1]
\REQUIRE{The input data of \eqref{eq:TRS}: $n, H, c$; the step size:  $\eta \in(0,2/\|H\|_2)$.}
\ENSURE{Approximation of the global minimizer of \eqref{eq:TRS}: $x^*$.}
\STATE	Initialize $(x^0, y^0)\in B_{2n}$ and the iteration number $k=0$.
	
	\WHILE{not converged}
		\STATE $\bar x^k=x^k-\eta(Hx^k+c),\  \bar y^k=y^k-\eta(Hy^k),\ d^k=\sqrt{(\bar x^k)^T\bar x^k+(\bar y^k)^T\bar y^k}$;
		
		\IF{$d^k> 1$}
			\STATE $x^{k+1}=\frac{\bar x^k}{d^k},~  y^{k+1}=\frac{\bar y^k}{d^k}$;
			\ELSE
				\STATE $x^{k+1}=\bar x^k,~ y^{k+1}=\bar y^k$;
	
	   \ENDIF
	 \STATE $k=k+1$;

\ENDWHILE
\STATE Denote by $(\tilde x, \tilde y)$ the convergence point of $\{(x^k, y^k)\}_{k=0}^{\infty}$.
\IF{$\tilde y=0$}
\STATE Output $x^*=\tilde x$.
\ELSE
	\STATE Compute $\theta=\frac{\sqrt{{(\tilde x^T\tilde y)}^2-\tilde y^{T}\tilde y(\tilde x^{T}\tilde x-1)}-\tilde x^{T}\tilde y}{\tilde y^{T}\tilde y}$, and output  $x^*=\tilde x+\theta\tilde y$.
	\ENDIF
\end{algorithmic}
\end{algorithm}

\begin{rem}
Initialized with $y^0=0$, Algorithm \ref{al:1} reduces to \eqref{eq:pro}.
\end{rem}
\begin{rem}
Comparing with the two-stage algorithm \cite{Ho-Nguyen17,wang17} based on the equivalent convex  reformulation \eqref{eq:C}, Algorithm \ref{al:1} can be regarded as a hybrid of power method and \eqref{eq:pro}. The iteration on the artificial variable $y^k$ (after normalizing)  corresponds to that of the power method for finding the largest dominant eigenvalue of $I-\eta H$.
In fact, when $\lambda_1\le0$, the largest dominant eigenvalue of $I-\eta H$ is $1-\eta\lambda_1$, since for any eigenvalue of $H$, $\lambda_i$, it holds that $\lambda_1\le\lambda_i\le L$ and hence
\[
-1\le 1-\eta \lambda_i\le 1-\eta \lambda_1~{\rm and}~1-\eta \lambda_1\ge1
\]
for $\eta \in(0,2/L]$.
Therefore, in case of $\lambda_1\le0$,  the sequence $\{y^k/\|y^k\|\}_{k=0}^{\infty}$ converges to $v_1$, the unit-eigenvector of $H$ corresponding to $\lambda_1$.
\end{rem}

With Lemma \ref{le:rate}, we can establish the local convergence rate of \eqref{eq:pro1} for solving \eqref{eq:z}.  To this end, we need the following result.

\begin{lemma}\label{lem:2}
If the ``ill case'' of \eqref{eq:TRS} holds, so does \eqref{eq:z} and vise versa.
\end{lemma}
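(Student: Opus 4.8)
The plan is to unwind the definition of ``ill case'' (i.e., hard case (iii)) on both sides and show the two conditions are equivalent termwise. Recall that ``ill case'' for \eqref{eq:TRS} means that the global KKT multiplier $\lambda^*$ satisfies $\lambda^* = -\lambda_1$, that $c \in \mathrm{Range}(H - \lambda_1 I)$, and that $\|(H-\lambda_1 I)^\dagger c\| = 1$; equivalently, $\lambda^* = -\lambda_1$ together with $\|(H-\lambda_1 I)^\dagger c\| = 1$ (which forces $c \in \mathrm{Range}(H-\lambda_1 I)$). For \eqref{eq:z} the analogous statement involves $A$ and $a$ from \eqref{Aa}: ``ill case'' of \eqref{eq:z} means the global multiplier $\lambda^\sharp$ of \eqref{eq:z} equals $-\lambda_{\min}(A) = -\lambda_1$ (since $A = \diag(H,H)$ has the same spectrum as $H$), that $a \in \mathrm{Range}(A - \lambda_1 I)$, and that $\|(A-\lambda_1 I)^\dagger a\| = 1$.

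First I would record the block structure: $A - \lambda_1 I = \diag(H - \lambda_1 I, H - \lambda_1 I)$, hence $(A-\lambda_1 I)^\dagger = \diag((H-\lambda_1 I)^\dagger, (H-\lambda_1 I)^\dagger)$, and since $a = (c,0)^T$ we get $(A-\lambda_1 I)^\dagger a = ((H-\lambda_1 I)^\dagger c,\, 0)^T$, so that $\|(A-\lambda_1 I)^\dagger a\| = \|(H-\lambda_1 I)^\dagger c\|$ and $a \in \mathrm{Range}(A-\lambda_1 I) \iff c \in \mathrm{Range}(H - \lambda_1 I)$. Next I would argue that the two global multipliers coincide: by Theorem \ref{th:reformu} and Proposition \ref{pr:ne}, a global minimizer $(\tilde x, \tilde y)$ of \eqref{eq:z} satisfies \eqref{eq:C1y_gra}--\eqref{eq:C1_com} with some $\tilde\lambda \ge \max\{-\lambda_1, 0\}$, and either $\tilde y = 0$ (so $\tilde x$ globally solves \eqref{eq:TRS} with the same multiplier, giving $\lambda^\sharp = \lambda^*$) or $\tilde y \neq 0$ (forcing $\tilde\lambda = -\lambda_1$ and hard case (ii) for \eqref{eq:TRS}, which is \emph{not} the ill case). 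Conversely, if \eqref{eq:TRS} is in the ill case with $\lambda^* = -\lambda_1$ and $\|(H-\lambda_1 I)^\dagger c\| = 1$, then $x^* = -(H-\lambda_1 I)^\dagger c$ has norm one, so $(x^*, 0)$ is feasible for \eqref{eq:z}, satisfies \eqref{eq:C1y_gra}--\eqref{eq:C1_com} with $\tilde\lambda = -\lambda_1 \ge \max\{-\lambda_1,0\}$, hence is a global minimizer of \eqref{eq:z} with multiplier $\lambda^\sharp = -\lambda_1$; combined with the norm identity above, \eqref{eq:z} is in its ill case. The reverse direction is symmetric once the multiplier equality is in hand.

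The routine part is the block-diagonal bookkeeping; the one point that needs care — and the main obstacle — is pinning down that the global multiplier of \eqref{eq:z} equals that of \eqref{eq:TRS} in the relevant regime, i.e., ruling out that lifting to $\R^{2n}$ secretly changes which subcase of the hard case occurs. The resolution is exactly the dichotomy in Theorem \ref{th:reformu}: the only way \eqref{eq:z} can have $\tilde y \neq 0$ at optimality is hard case (ii) of \eqref{eq:TRS}, where $\|(H-\lambda_1 I)^\dagger c\| < 1$, which is incompatible with the ill case on either side; so in the ill case we are always in the $\tilde y = 0$ branch and the multipliers match. I would also note the harmless degenerate possibility $\lambda_1 \ge 0$, in which neither problem is in the ill case (the multiplier bound $\tilde\lambda \ge -\lambda_1$ is automatic and the second-order condition is vacuous), so the equivalence holds trivially there.
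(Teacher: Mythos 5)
Your proof is correct and rests on the same key computation as the paper's: from $A-\lambda_1 I=\diag(H-\lambda_1 I,\,H-\lambda_1 I)$ and $a=(c,0)$ one gets $(A-\lambda_1 I)^\dagger a=\bigl((H-\lambda_1 I)^\dagger c,\,0\bigr)$, so the range and norm conditions transfer verbatim between \eqref{eq:TRS} and \eqref{eq:z}. The only difference is that the paper characterizes the ill case purely in terms of the data ($\lambda_1\le 0$, $c\in\mathrm{Range}(H-\lambda_1 I)$, $\|(H-\lambda_1 I)^\dagger c\|=1$), which makes the equivalence immediate and avoids your extra (valid but unnecessary) detour through Theorem \ref{th:reformu} to match the optimal multipliers.
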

\begin{proof}
According to Table \ref{tab:tab1}, the ``ill case'' of \eqref{eq:TRS} holds if and only if
\begin{equation}\label{eq:illx}
\lambda_1\le0,\ c\in{\rm Range(H-\lambda_1 I)}\ {\rm\ and\ } \|{(H-\lambda_1 I)}^{\dagger}c\|=1.
\end{equation}
Note that $\lambda_1=\lambda_{\min}(H)$.
By the definition of $A$ and $a$ \eqref{Aa}, we have $\lambda_{\min}(A)=\lambda_1$, and then
$a\in{\rm Range(A-\lambda_1 I)}$ if and only if
$c\in{\rm Range(H-\lambda_1 I)}$. Moreover, we have
\[
(A-\lambda_1 I)^{\dagger}a =
\left[\begin{array}{c}
     (H-\lambda_1 I)^{\dagger}c\\ 0
	\end{array}\right],
\]
and hence $\|{(A-\lambda_1 I)}^{\dagger}a\|=\|{(H-\lambda_1 I)}^{\dagger}c\|$. In sum, we obtain
\begin{equation}\label{eq:illz}
\lambda_1\le0,\ a\in{\rm Range}(A-\lambda_{\min}(A) I), {\rm\ and\ } \|{(A-\lambda_{\min}(A) I)}^{\dagger}a\|=1.
\end{equation}
Then the ``ill case'' of \eqref{eq:z} holds. The reverse part is similarly proved.
\end{proof}

By Theorem \ref{th:p1}, Lemmas \ref{le:rate} and \ref{lem:2}, we have the following result.
\begin{corollary}\label{corM}
Initialized with a point uniformly and randomly selected from $B_{2n}$, the sequence generated by Algorithm \ref{al:1} converges to a correct point (from which one can construct the global minimizer of \eqref{eq:TRS}) with probability one.
The local convergence rate is linear for the  ``well case'' and sublinear for the ``ill case''.
\end{corollary}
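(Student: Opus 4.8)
The plan is to assemble the three results cited before the statement, together with the recovery of Theorem~\ref{th:reformu}. For the first assertion, observe that Algorithm~\ref{al:1} runs exactly \eqref{eq:pro1} on \eqref{eq:z} to produce $\{(x^k,y^k)\}_{k=0}^\infty$, and then performs, once, the closed-form recovery of Theorem~\ref{th:reformu}. By Theorem~\ref{th:p1}, when $(x^0,y^0)$ is uniformly and randomly selected from $B_{2n}$, the sequence $\{(x^k,y^k)\}_{k=0}^\infty$ converges with probability one to a global minimizer $(\tilde x,\tilde y)$ of \eqref{eq:z}, equivalently of \eqref{eq:C1}. On that probability-one event, Theorem~\ref{th:reformu} guarantees that the point $x^*$ returned in the last lines of Algorithm~\ref{al:1} (namely $x^*=\tilde x$ if $\tilde y=0$, and $x^*=\tilde x+\theta\tilde y$ with $\theta$ as defined there otherwise) is a global minimizer of \eqref{eq:TRS}. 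Hence Algorithm~\ref{al:1} returns a correct point with probability one.

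For the rate claim, the key point is that \eqref{eq:z} is itself an instance of \eqref{eq:TRS}, in dimension $2n$, with data $A,a$ given by \eqref{Aa}, and that \eqref{eq:pro1} is precisely \eqref{eq:pro} applied to this instance with a constant step size $\eta\in(0,2/L)$, $L=\|A\|_2=\|H\|_2$, which is exactly the range in which Lemma~\ref{le:rate} applies. Therefore, restricting attention to the probability-one event on which $\{(x^k,y^k)\}_{k=0}^\infty$ converges to the global minimizer of \eqref{eq:z}, Lemma~\ref{le:rate} yields, for the sequence $\{z^k\}_{k=0}^\infty=\{(x^k,y^k)\}_{k=0}^\infty$, the sublinear decay of $f(z^k)-f^*$ stated there when \eqref{eq:z} is in its ``ill case'' and a linear decay otherwise. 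It remains to translate ``ill case of \eqref{eq:z}'' into the language of the original problem: by Lemma~\ref{lem:2}, \eqref{eq:z} is in the ``ill case'' if and only if \eqref{eq:TRS} is, so the local rate is linear exactly in the ``well case'' of \eqref{eq:TRS} and sublinear exactly in its ``ill case''. The final recovery step is a single algebraic evaluation performed only after convergence, so it does not affect the asymptotic rate.

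I expect the only delicate point — and the part I would write most carefully — to be checking that the hypotheses of the cited results are verified for the \emph{lifted} problem \eqref{eq:z} rather than inadvertently for the original \eqref{eq:TRS}: that the ``well''/``ill'' dichotomy and the constant $M$ appearing in Lemma~\ref{le:rate} are those associated with the data $(A,a)$; that the limit point to which Lemma~\ref{le:rate} is applied is a global minimizer of \eqref{eq:z}, which Theorem~\ref{th:p1} provides only with probability one, so that the rate conclusions are to be read on that full-measure event; and that Lemma~\ref{lem:2} is exactly what bridges the two ``ill cases''. None of this is hard, but these are the spots where a loose ``corollary'' argument could slip.
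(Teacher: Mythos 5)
Your proposal is correct and follows exactly the paper's route: the paper derives this corollary directly from Theorem~\ref{th:p1} (probability-one convergence to a global minimizer of the lifted problem), Lemma~\ref{le:rate} (linear/sublinear rates according to well/ill case, applied to the $2n$-dimensional instance), and Lemma~\ref{lem:2} (equivalence of the ill cases of \eqref{eq:TRS} and \eqref{eq:z}), with Theorem~\ref{th:reformu} supplying the closed-form recovery. Your care in checking that the hypotheses are verified for the lifted data $(A,a)$ and on the full-measure convergence event is exactly the right reading of the paper's one-line justification.
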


\begin{rem}
In comparison with Corollary \ref{corM}, the local convergence rate of the second run of double-start \eqref{eq:pro} presented in \cite{Beck2018} could be sublinear even in the ``easy case'', see Example \ref{ex:1}.
\end{rem}

\begin{rem}
Suppose $H$ has $p$ nonzero entries. The worst-case computational costs of  \eqref{eq:pro} and our Algorithm \ref{al:1} in each iteration are $2p+5n$ and $4p+9n$, respectively.
Therefore, compared with double-start \eqref{eq:pro}, Algorithm \ref{al:1} still has a slight benefit  in view of complexity per iteration as
$4p+9n < 2\times(2p+5n).$
The reason is that the objective function in \eqref{eq:z} has no linear term with respect to the artificial variable $y$.
\end{rem}

\section{Globally solving \eqref{eq:TRS_e}}\label{sec:eq}
In this section, we first present the generalized projected gradient method  for globally solving \eqref{eq:TRS_e} and then suggest a potentially more efficient approach by reformulating \eqref{eq:TRS_e} as \eqref{eq:TRS}.

\subsection{Generalized projected gradient}
We begin with a general nonconvex optimization problem with a single constraint:
\begin{equation}\label{eq:fg}
		\min\{h(x): ~x\in Q=\{x\in \R^n:~g(x)=0\}\},
\end{equation}
where $h,g: \R^n\rightarrow \R$ are continuously differentiable, $\nabla h$ is Lipschitz continuous on $Q$ with Lipschitz constant $L$.  Assume that $Q$ is compact and $Q\cap\{x: \nabla g(x)=0\}=\emptyset$ so that LICQ holds for \eqref{eq:fg}.

The generalized projected gradient  reads as
\begin{equation}\label{eq:x}\tag{GPG}
x^{k+1}\in {\rm argmin}\{\|x-x^k\|^2:~x\in P_Q(x^k-\eta_e\nabla h(x^k))\},
\end{equation}
where $\eta_e$ is the step size.

\begin{proposition}\label{th:eTRS}
Let $\{x^k\}_{k=0}^{\infty}$ be the sequence generated by \eqref{eq:x} with $x^0\in Q$ and constant step size $\eta_e\in(0, 1/L]$. Then we have the following results.
\begin{itemize}
\item[(i)] 	For all $k$, it holds that
	\begin{eqnarray}
&& h(x^{k+1})-h(x^{k})\le
\frac{1}{\eta_e}\left({\|x^{k+1}-s^k\|}^2-{\|x^{k}-s^k\|}^2\right)
\le0,\nonumber
\end{eqnarray}
where $s^k=x^{k}-\eta_e\nabla h(x^{k})$.
	\item[(ii)] Any accumulation point of $\{x^k\}_{k=0}^{\infty}$ is a stationary point of \eqref{eq:fg}.
\end{itemize}
\end{proposition}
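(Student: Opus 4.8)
The plan is to follow the classical descent-lemma argument for projected-gradient-type methods, adapted to the (nonconvex) equality constraint $g(x)=0$. For part (i), I would start from the standard quadratic upper bound implied by the $L$-Lipschitz continuity of $\nabla h$ on $Q$: for any $x,x^k\in Q$,
\[
h(x)\le h(x^k)+\nabla h(x^k)^T(x-x^k)+\frac{L}{2}\|x-x^k\|^2.
\]
Completing the square on the right-hand side rewrites this, up to an additive constant depending only on $x^k$, as $\frac{L}{2}\|x-s^k\|^2$ with $s^k=x^k-\eta_e\nabla h(x^k)$ when $\eta_e=1/L$; for $\eta_e\le 1/L$ one keeps the sharper bound $\frac{1}{2\eta_e}\|x-s^k\|^2$ on top of $\frac{L}{2}-\frac{1}{2\eta_e}\le 0$ slack. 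Concretely, I would show
\[
h(x)-h(x^k)\le \frac{1}{\eta_e}\Bigl(\|x-s^k\|^2-\|x^k-s^k\|^2\Bigr)
\]
for every $x\in Q$ (using $\eta_e\le 1/L$ to absorb the quadratic term). Then I would plug in $x=x^{k+1}$. The point is that $x^{k+1}\in P_Q(s^k)$, i.e. $x^{k+1}$ minimizes $\|x-s^k\|$ over $x\in Q$, so in particular $\|x^{k+1}-s^k\|\le\|x^k-s^k\|$ (since $x^k\in Q$ is a competitor). This gives the rightmost inequality $\le 0$ and hence the monotone decrease of $\{h(x^k)\}$.

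For part (ii), let $\bar x$ be an accumulation point, say $x^{k_j}\to\bar x$. Summing the inequality in (i) telescopes: $\sum_k \bigl(\|x^k-s^k\|^2-\|x^{k+1}-s^k\|^2\bigr)\le \eta_e\bigl(h(x^0)-\inf_Q h\bigr)<\infty$ because $h$ is continuous on the compact set $Q$ and is bounded below. Hence $\|x^k-s^k\|^2-\|x^{k+1}-s^k\|^2\to 0$, which (expanding $s^k=x^k-\eta_e\nabla h(x^k)$) forces $\|x^{k+1}-x^k\|\to 0$ and $\nabla h(x^k)^T(x^{k+1}-x^k)\to 0$. I would then use the fact that $x^{k+1}$ is a metric projection of $s^k$ onto $Q$, whose first-order optimality condition (valid because LICQ holds on $Q$ by assumption, so the projection's KKT system is nondegenerate) reads: there exists a multiplier $\mu^k$ with $x^{k+1}-s^k+\mu^k\nabla g(x^{k+1})=0$, i.e.
\[
\eta_e\nabla h(x^k)+(x^{k+1}-x^k)+\mu^k\nabla g(x^{k+1})=0.
\]
Along the subsequence $k=k_j$, $x^{k_j}\to\bar x$, $x^{k_j+1}\to\bar x$ (using $\|x^{k+1}-x^k\|\to 0$), and $\nabla g(\bar x)\neq 0$ (by $Q\cap\{\nabla g=0\}=\emptyset$), so the multipliers $\mu^{k_j}$ stay bounded and have a convergent subsubsequence $\mu^{k_j}\to\bar\mu$; passing to the limit yields $\eta_e\nabla h(\bar x)+\bar\mu\nabla g(\bar x)=0$, which with $g(\bar x)=0$ is exactly the stationarity (KKT) condition for \eqref{eq:fg} at $\bar x$.

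The main obstacle I anticipate is the step-size bookkeeping in part (i) — making the constant $\frac{1}{\eta_e}$ (rather than $\frac{1}{2\eta_e}$ or $\frac{L}{2}$) come out correctly from the descent lemma and verifying that $\eta_e\le 1/L$ is exactly what is needed to drop the leftover quadratic term — and, in part (ii), justifying the boundedness of the projection multipliers $\mu^k$ uniformly along the sequence. The latter is where compactness of $Q$ and the uniform non-vanishing of $\nabla g$ on $Q$ (a consequence of the standing LICQ assumption $Q\cap\{\nabla g=0\}=\emptyset$ plus continuity of $\nabla g$ on the compact $Q$) do the real work; everything else is telescoping and a routine limit argument.
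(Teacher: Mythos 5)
Part (i) of your proposal is essentially the paper's argument: descent lemma, substitute $\nabla h(x^k)=\frac{1}{\eta_e}(x^k-s^k)$, complete the square, and use that $x^k\in Q$ is a feasible competitor in the projection defining $x^{k+1}$ so that $\|x^{k+1}-s^k\|\le\|x^k-s^k\|$. The factor-of-two issue you flag is real but is an inconsistency inside the paper itself: the computation (yours and the paper's) naturally yields the constant $\frac{1}{2\eta_e}$, while the statement displays $\frac{1}{\eta_e}$; since the bracketed quantity is nonpositive, the $\frac{1}{2\eta_e}$ bound is the one that actually follows, and it still gives monotone decrease, which is all that is used downstream.

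Part (ii), however, contains a genuine gap. From $\|x^{k}-s^k\|^2-\|x^{k+1}-s^k\|^2\to 0$ you claim that expanding $s^k$ ``forces'' $\|x^{k+1}-x^k\|\to 0$ and $\nabla h(x^k)^T(x^{k+1}-x^k)\to 0$. The expansion actually gives
\[
\|x^{k}-s^k\|^2-\|x^{k+1}-s^k\|^2=-\|x^{k+1}-x^k\|^2-2\eta_e\,\nabla h(x^k)^T(x^{k+1}-x^k),
\]
so you only learn that the \emph{sum} $\|x^{k+1}-x^k\|^2+2\eta_e\nabla h(x^k)^T(x^{k+1}-x^k)$ tends to zero; the cross term is sign-indefinite and can cancel the squared norm, so neither term individually need vanish. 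One can patch this by combining with $h(x^{k+1})-h(x^k)\to 0$ and the descent lemma, which yields $\bigl(\tfrac{1}{2\eta_e}-\tfrac{L}{2}\bigr)\|x^{k+1}-x^k\|^2\to 0$ — but this is vacuous precisely at the endpoint $\eta_e=1/L$, which the proposition permits. Since your passage to the limit in the projection KKT system needs $x^{k_j+1}\to\bar x$, the argument as written does not close. The paper sidesteps this entirely: it never shows $\|x^{k+1}-x^k\|\to 0$, but instead uses that $\|x^{k+1}-s^k\|^2=\|P_Q(s^k)-s^k\|^2$ is the squared distance to the compact set $Q$, which is continuous in $s^k$; hence along the subsequence $\|x^{m_k+1}-s^{m_k}\|^2\to\|P_Q(\bar s)-\bar s\|^2$ while $\|x^{m_k}-s^{m_k}\|^2\to\|\bar x-\bar s\|^2$, and the vanishing difference gives $\bar x\in P_Q(\bar s)$ directly. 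Stationarity then follows from the KKT condition of the \emph{limiting} projection problem at $\bar x$ (using LICQ), with no need to control multipliers along the iterates. You should either adopt that route or restrict your argument to $\eta_e<1/L$ and supply the missing estimate.
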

\begin{proof}
(i) Since $\nabla h$ is $L$-Lipschitz continuous and $\eta_e\in(0, 1/L]$, we have
	\begin{eqnarray}
 h(x^{k+1})-h(x^{k})
&\le&(\nabla h(x^k))^T(x^{k+1}-x^{k})+\frac{L}{2}{\|x^{k+1}-x^{k}\|}^2\nonumber\\
&\le&\frac{1}{\eta_e}(x^{k}-s^k)^T(x^{k+1}-x^{k})+\frac{1}{2\eta_e}{\|x^{k+1}-x^{k}\|}^2\nonumber\\		&=&\frac{1}{2\eta_e}\left({\|x^{k+1}-s^k\|}^2-{\|x^{k}-s^k\|}^2\right)\nonumber\\
&\le&0,\nonumber
\end{eqnarray}
where the last inequality follows from the selection of $x^{k+1}$ in \eqref{eq:x}.
	
(ii)   Since $Q$ is compact,  $\{x^k\}_{k=0}^{\infty}\subseteq Q$ has an accumulation point. Suppose that the subsequence $\{x^{m_k}\}$ converges to $\bar x$.
Then $\{s^{m_k}\}$ converges to $\bar s:=\bar x-\eta_e\nabla h(\bar x)$ and $h(x^{m_{k+1}})-h(x^{m_{k}})\rightarrow 0$. By (i), $h(x^{k})$ is non-increasing with $k$. Thus, $h(x^{k+1})-h(x^{k})\rightarrow 0$ as $k\rightarrow \infty$. Then it follows from (i) that
	\begin{equation}\label{eq:1}
		{\|x^{k+1}-s^k\|}^2-{\|x^{k}-s^k\|}^2\rightarrow 0,~{\rm as}~k\rightarrow \infty.
	\end{equation}
 Since $Q$ is compact, it holds that
{	\begin{equation}\label{eq:2}
	{\|x^{k+1}-s^k\|}^2={\|P_Q(s^k) -s^k\|}^2\rightarrow{\|P_Q(\bar s) -\bar s\|}^2,~{\rm as}~k\rightarrow \infty.
	\end{equation}}
	According to ${\rm lim}_{k\rightarrow+\infty}x^{m_k}=\bar x,\ {\rm lim}_{k\rightarrow+\infty}s^{m_k}=\bar s$ and \eqref{eq:1}-\eqref{eq:2}, we have
\[
{\|\bar x -\bar s\|}^2={\|P_Q(\bar s) -\bar s\|}^2,
\]
and then it holds that $\bar x\in P_Q(\bar s)=P_Q(\bar x-\eta_e \nabla h(\bar x))$. That is,
\begin{equation}\label{eq:barx}
\bar x\in {\rm argmin} \{\|x-(\bar x-\eta_e \nabla h(\bar x))\|^2:~g(x)=0\}.
\end{equation}
By  the KKT condition of \eqref{eq:barx}, there exists $\lambda\in\R$ such that $\nabla h(\bar x)+\lambda/\eta_e \nabla g(\bar x)=0$. Consequently, $\bar x$ is a stationary point of \eqref{eq:fg}.
\end{proof}

\subsection{Solving \eqref{eq:TRS_e} by \eqref{eq:x}}
Before solving \eqref{eq:TRS_e}, 
we first present the following lifted reformulation of \eqref{eq:TRS_e} similar to \eqref{eq:TRS_e}:
\begin{equation}\label{eq:ze}\tag{De}
\min \{f(z)=\frac{1}{2}z^TAz+a^Tz:\, z\in \partial B_{2n}\},
\end{equation}
where $A$ and $a$ are defined in \eqref{Aa}. Similar to Proposition \ref{pr:ne}, \eqref{eq:ze} has the nice property that any  second-order stationary point is
a global minimizer.
\begin{rem}
Our reformulation \eqref{eq:ze} is new.  Recently,
Boumal et al. \cite{Boumal2019} proposed the following rank-two SDP reformulation for \eqref{eq:TRS_e} based on Burer-Monteiro decomposition \cite{Burer03}:
\begin{equation}\label{eq:X}
	\min_{Y_1\in\R^{n\times 2}, y_2\in\R^2} \{ {\rm tr}(CY Y^T):\, {\rm tr}( Y_1Y_1^T)=1, \, {\|y_2\|}^2=1, \,
Y^T=[Y_1^T ~ y_2]
\}.
\end{equation}
It can be regarded as a heavy homogenized version of
our reformulation \eqref{eq:ze} with more variables and constraints.
\end{rem}

Motivated by \eqref{eq:pro1},
we employ \eqref{eq:proe} to solve   \eqref{eq:TRS_e} and obtain the iterative formlula
\begin{equation}\label{eq:proe2}\tag{PGe2}
	z^{k+1}=
	\frac{z^k-\eta_e\nabla f(z^k)}{\|z^k-\eta_e\nabla f(z^k)\|}.
\end{equation}
Note that \eqref{eq:proe2} is well-defined, if it holds that $z^k-\eta_e \nabla f(z^k)\neq 0$ for all $k$.
Suppose that there is a positive integer $K$ such that $z^K-\eta_e \nabla f(z^K)=0$,
we can stop the iteration as $(z^K,-1/(2\eta_e))$ is already a KKT point of \eqref{eq:ze}.
As an extension of Lemma \ref{le:conv}, we can establish the convergence result.
\begin{lemma}\label{le:decrease}
Let the sequence $\{z^{k}\}_{k\ge0}$ be generated by \eqref{eq:proe2} for solving \eqref{eq:ze} with $z^0\in B_{2n}$ and constant step size $\eta_e\in(0, 1/L]$. Then either there is a positive integer $K$ such that $z^{K}$  is a stationary point or $\{z^{k}\}_{k=0}^{\infty}$  converges to a stationary point of \eqref{eq:ze}.
\end{lemma}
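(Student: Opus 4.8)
The target statement (Lemma \ref{le:decrease}) is a convergence result for \eqref{eq:proe2} solving the lifted equality-constrained problem \eqref{eq:ze}: either the iteration terminates at a stationary point, or the infinite sequence converges to one. The natural strategy is to mirror the proof of Proposition \ref{th:eTRS} as closely as possible, since \eqref{eq:proe2} is precisely \eqref{eq:x} specialized to $h(z)=f(z)=\tfrac12 z^TAz+a^Tz$ and $g(z)=z^Tz-1$, where the projection $P_{\partial B_{2n}}$ has the closed-form normalization formula. The one genuinely new ingredient, compared to Proposition \ref{th:eTRS}, is the promotion from ``every accumulation point is stationary'' to ``the whole sequence converges'' — the manifold $\partial B_{2n}$ is compact so accumulation points exist, but one must rule out oscillation between distinct limit points.

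\textbf{Step 1: the termination alternative and well-definedness.} First I would dispose of the case where $z^K-\eta_e\nabla f(z^K)=0$ for some $K$: then, as remarked just before the statement, $(z^K,-1/(2\eta_e))$ satisfies the KKT system $\nabla f(z^K)+(-1/(2\eta_e))\cdot 2 z^K=0$ with $\|z^K\|=1$, so $z^K$ is a stationary point of \eqref{eq:ze} and the iteration legitimately stops. In the complementary case, $z^k-\eta_e\nabla f(z^k)\neq 0$ for all $k$, so \eqref{eq:proe2} is well-defined for the entire infinite sequence, and $\{z^k\}\subseteq\partial B_{2n}$.

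\textbf{Step 2: monotone decrease and accumulation points.} I would reproduce the descent inequality of Proposition \ref{th:eTRS}(i): with $s^k=z^k-\eta_e\nabla f(z^k)$, the $L$-Lipschitz continuity of $\nabla f$ (constant $L=\|A\|_2$) together with $\eta_e\in(0,1/L]$ and the fact that $z^{k+1}=P_{\partial B_{2n}}(s^k)$ is a \emph{nearest} point of $\partial B_{2n}$ to $s^k$ gives
\[
f(z^{k+1})-f(z^k)\le \tfrac{1}{2\eta_e}\bigl(\|z^{k+1}-s^k\|^2-\|z^k-s^k\|^2\bigr)\le 0.
\]
Hence $\{f(z^k)\}$ is non-increasing; being bounded below on the compact set $\partial B_{2n}$, it converges, so $f(z^{k+1})-f(z^k)\to 0$, which forces $\|z^{k+1}-s^k\|^2-\|z^k-s^k\|^2\to 0$. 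Compactness of $\partial B_{2n}$ then gives an accumulation point $\bar z$; passing to a subsequence and repeating the argument of Proposition \ref{th:eTRS}(ii) (continuity of $f$, of $z\mapsto z-\eta_e\nabla f(z)$, and of the distance-to-$\partial B_{2n}$ function) shows $\bar z\in P_{\partial B_{2n}}(\bar z-\eta_e\nabla f(\bar z))$, whence $\bar z$ is a stationary point of \eqref{eq:ze} via the KKT condition of the projection problem.

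\textbf{Step 3: uniqueness of the limit — the main obstacle.} The remaining work, and the part I expect to be delicate, is upgrading ``has a stationary accumulation point'' to ``converges''. There are two routes. One is to argue that $\|z^{k+1}-z^k\|\to 0$: indeed $\|z^{k+1}-s^k\|^2-\|z^k-s^k\|^2\to 0$ and both points lie at comparable distance from $s^k$, and one can show $\|z^{k+1}-z^k\|^2\le 2\bigl(\|z^k-s^k\|^2-\|z^{k+1}-s^k\|^2\bigr)$ type bound using that $z^{k+1}$ is the nearest point — giving asymptotic regularity; combined with the fact that the set of stationary points of \eqref{eq:ze} is finite (the KKT analysis of a $2n$-dimensional \eqref{eq:TRS_e}, analogous to the secular-equation count in the proof of Theorem \ref{th:p1}, shows only finitely many KKT multipliers, and for each multiplier $-\lambda_i$ the stationary set is an affine slice of an eigenspace — here one must be slightly careful because in the equality-constrained case a whole sphere inside an eigenspace can be stationary, so the stationary set need not be finite), asymptotic regularity plus a discrete/low-dimensional limit set forces convergence. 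The cleaner alternative, and the one I would actually pursue, is to invoke the convergence result already cited for \eqref{eq:x}/\eqref{eq:proe}: the statement says this is ``an extension of Lemma \ref{le:conv}'' and that \eqref{eq:proe2} is ``motivated by \eqref{eq:pro1}'', so \eqref{eq:ze} falls under the \eqref{eq:proe}-framework of \cite{Polyak19,Jain17,Beck2018} for generalized projected gradient on $\partial B_n$; once Step 2 is in place, one cites the corresponding global convergence theorem for that framework to conclude that $\{z^k\}$ itself converges to a stationary point. I would structure the final proof around Steps 1–2 done explicitly and Step 3 handled by quoting the framework result, flagging that the subtlety is precisely that $\partial B_{2n}$ is nonconvex so $P_{\partial B_{2n}}$ is multivalued at the origin — which is exactly why the termination alternative in Step 1 is needed.
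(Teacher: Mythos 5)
Your Steps 1 and 2 coincide with the paper's proof: the termination alternative is handled exactly as you describe, and the paper then invokes Proposition \ref{th:eTRS} (already proved in the general setting) to conclude that every accumulation point is stationary and that $\{f(z^k)\}$ is non-increasing. The problem is Step 3, which is the entire new content of the lemma, and neither of your two routes closes it. Route (b), the one you say you would actually pursue, does not exist: Lemma \ref{le:conv} is a statement about projection onto the convex ball $B_{2n}$, not the sphere, and the paper explicitly rejects the two candidate ``framework'' results for the sphere --- the assumptions of \cite{Polyak19} are called too restrictive for \eqref{eq:TRS_e}, and \cite[Theorem 3.3]{Jain17} is shown to be \emph{false} by Example \ref{ex:gPG_error}. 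So there is no theorem to quote; Lemma \ref{le:decrease} is precisely the result that has to be proved from scratch. Route (a) founders on the obstruction you yourself flag: the stationary set of \eqref{eq:ze} at a fixed objective level can be a positive-dimensional sphere inside the degenerate eigenspace, so asymptotic regularity ($\|z^{k+1}-z^k\|\to 0$) only yields a \emph{connected} limit set, which could still be an arc of that sphere. You identify the difficulty but do not resolve it.

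The paper's resolution is a contradiction argument with two ingredients you are missing. First, if $\bar z\neq\tilde z$ are two accumulation points, monotonicity of $f(z^k)$ gives $f(\bar z)=f(\tilde z)$, and a known fact about sphere-constrained quadratics (\cite[Theorem 1]{Gander81} or \cite[Remark 3.1]{Wang9022}) then forces the two KKT multipliers to be \emph{equal}, say $\mu$; diagonalizing $A=U\Lambda U^T$, the components of $U^T\bar z$ and $U^T\tilde z$ outside $I_0=\{i:\Lambda_{ii}+\mu=0\}$ are then uniquely determined by the KKT system and hence agree. Second --- and this is the key idea --- on the indices $i\in I_0$ one has $(U^Ta)_i=0$, so the iteration reduces to $(U^Tz^{k+1})_i=(1+\eta_e\mu)(U^Tz^k)_i/\|s^k\|$ with $1+\eta_e\mu\ge 0$ (here is where $\eta_e\le 1/L$ is used); thus the $I_0$-block of every iterate is a \emph{fixed nonnegative multiple} of that of $z^0$, the direction in the degenerate eigenspace is frozen along the whole trajectory, and the constraint $\|\bar z\|=\|\tilde z\|=1$ combined with the agreement outside $I_0$ forces the two nonnegative scaling factors to coincide, giving $\bar z=\tilde z$. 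Without this frozen-direction observation, nothing prevents the sequence from drifting around the stationary sphere, so the proposal as written has a genuine gap.
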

\begin{proof}
Define $s^k=z^{k}-\eta_e\nabla f(z^{k})$ for $k=0, \dots, +\infty$.
As shown above,
if there is a positive integer $K$ such that $s^K=0$, then $z^K$ is already a stationary point of \eqref{eq:ze}.
Now we assume $s^k\neq 0$ for all $k\ge 0$.

Suppose that $\{z^k\}_{k=0}^{\infty}$ has two different accumulation points, say $\bar z$ and $\tilde z$. By Proposition \ref{th:eTRS} (ii), both $\bar z$ and $\tilde z$ are stationary points. Let $\bar \mu/2$ and $\tilde \mu/2$ be the KKT multipliers corresponding to $\bar z$ and $\tilde z$, respectively. According to Proposition \ref{th:eTRS} (i), $\{f(z^k)\}_{k=0}^{\infty}$ is non-increasing. Then it holds that $f(\bar z)=f(\tilde z)$. As $\bar z\neq \tilde z$, according to \cite[Theorem 1]{Gander81} or \cite[Remark 3.1]{Wang9022}, we have $\bar \mu=\tilde \mu$. For simplicity, we use $\mu$ to represent $\bar \mu$ and $\tilde \mu$. We first write down the KKT condition:
\begin{equation}\label{eq:gra}
(A+\mu I)\bar z+a=(A+\mu I)\tilde z+a=0,\ \bar z^T\bar z=\tilde z^T\tilde z=1.
\end{equation}
Denote $\bar z=(\bar x, \bar y)$ and $\tilde z=(\tilde x, \tilde y)$.
Based on the eigenvalue decomposition $A=U\Lambda U^T$, \eqref{eq:gra} is equivalent to
\begin{equation}
(\Lambda+\mu I)U^T\bar z+U^Ta=(\Lambda+\mu I)U^T\tilde z+U^Ta=0.\label{kkt:d}
\end{equation}
Let  $I_0=\{i: ~\Lambda_{ii}+\mu=0\}$. By \eqref{kkt:d}, it holds that
\begin{equation}\label{Ux:1}
(U^T\bar z)_i=(U^T\tilde z)_i= -(U^Ta)_i/(\Lambda_{ii}+\mu ){\ \rm for\ all\ } i\not\in I_0.
\end{equation}
We obtain that $\bar z=\tilde z$ if $I_0=\emptyset$.
 Below we consider $I_0\neq\emptyset$. It follows from \eqref{kkt:d} that
\[
(U^Ta)_i=0{\ \rm for\ all\ } i\in I_0.
\]
Then the iterative scheme of $z^k$ reduces to
\[
(U^T z^{k+1})_i=\frac{(U^T z^k)_i-\eta_e\Lambda_{ii}{(U^T z^k)}_i}{\| s^k \|}=\frac{(1+\eta_e\mu){(U^T z^k)}_i}{\| s^k \|} {\ \rm for\ all\ } i\in I_0,
\]
which implies that
\begin{equation}\label{eq:i}
		(U^T z^{k})_i=\frac{{(1+\eta_e\mu)}^k}{\prod\limits_{j=0}^{k}\| s^j \|}{(U^T z^0)}_i{\ \rm for\ all\ } i\in I_0 {\rm\ and\ } k=0, \cdots, +\infty.
	\end{equation}
Since $\Lambda_{ii}+\mu=0$, $\mu\ge -L$ holds. Then, for any $\eta_e\in(0, 1/L]$, we have
\[
1+\eta_e\mu\ge 0.
\]
Therefore, there are two scalars $\alpha, \beta\ge0$ such that
\begin{equation} \label{Ux:3}
(U^T\bar z)_{i}=\alpha (U^T z^0)_i,~(U^T\tilde z)_{i}=\beta (U^T z^0)_i {\ \rm for\ all\ } i\in I_0.
\end{equation}
It follows from the facts $\|U^T\bar z\|=\|\bar z\|=1=\|\tilde z\|=\|U^T\tilde z\|$ and \eqref{Ux:1} that
\begin{equation} \label{Ux:4}
\sum_{i\in I_0} (U^T\bar z)_{i}^2=\sum_{i\in I_0} (U^T\tilde z)_{i}^2.
\end{equation}
Substituting \eqref{Ux:3} into \eqref{Ux:4} yields that  either
$\sum_{i\in I_0} (U^T z^0)_i^2=0$ or $\alpha^2=\beta^2$ holds.  In the former case,  $(U^T z^0)_i=0$  and hence $(U^T\bar z)_{i}=(U^T\tilde z)_{i}=0$ holds for all $i\in I_0$ by \eqref{eq:i}. In the latter case, we have $\alpha =\beta $ since both are nonnegative. In sum, we always have
\begin{equation} \label{Ux:2}
(U^T\bar z)_{i}=(U^T\tilde z)_{i}~{\ \rm for\ all\ } i\in I_0.
\end{equation}
Combining \eqref{Ux:1} with \eqref{Ux:2} yields the contradiction $\bar z=\tilde z$.
\end{proof}

Similar to Theorem \ref{th:p1}, we establish the following result.
\begin{theorem}\label{th:conv_e}
Initialized with a point {uniformly and randomly generated} from $\partial B_{2n}$, the sequence generated by \eqref{eq:proe2} with step size $\eta_e\in(0, 1/L]$ converges to the global minimizer of \eqref{eq:ze} with probability one.
\end{theorem}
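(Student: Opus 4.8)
The plan is to transplant the two‑part scheme from the proof of Theorem~\ref{th:p1}, with the sphere $\partial B_{2n}$ replacing the ball. By Lemma~\ref{le:decrease}, the sequence $\{z^k\}$ generated by \eqref{eq:proe2} either reaches a stationary point of \eqref{eq:ze} in finitely many steps (and stays there) or converges to one; in either case it ``converges'' to some stationary point, so it suffices to show that the set $Z\subset\partial B_{2n}$ of initial points $z^0$ from which $\{z^k\}$ converges to a \emph{non-global} stationary point of \eqref{eq:ze} has surface measure zero. As in the second part of the proof of Theorem~\ref{th:p1}, writing the secular function $\phi$ of \eqref{eq:se} for the $2n$‑dimensional data $(A,a)$ of \eqref{eq:ze}, the strict convexity of $\phi$ on each $(-\lambda_{i+1},-\lambda_i)$ and its monotonicity on $(-\lambda_1,\infty)$ and $(-\infty,-\lambda_n)$ \cite{Mart94}, together with the classical optimality conditions, show that \eqref{eq:ze} has only finitely many KKT multiplier values attached to non-global stationary points; collect them in a finite set $M$. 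It then remains to prove that for each $\bar\mu/2\in M$, convergence of $\{z^k\}$ to a non-global stationary point $\bar z$ with multiplier $\bar\mu/2$ forces $z^0$ into a fixed hyperplane through the origin.

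Fix such a $\bar z$, so $(A+\bar\mu I)\bar z+a=0$ and $\|\bar z\|=1$. By the global optimality condition for the equality‑constrained problem (cf.~\cite{Gander81,More83}) and $\lambda_1=\lambda_{\min}(A)$, non‑globality of $\bar z$ forces $\bar\mu<-\lambda_1$, so that $\tilde\lambda_1:=\lambda_1+\bar\mu<0$ is the minimum eigenvalue of $A+\bar\mu I$. Since $A=\diag(H,H)$ has every eigenvalue of even multiplicity, the $\tilde\lambda_1$‑eigenspace $V_1$ of $A+\bar\mu I$ has dimension at least two, while $V_2:=\{v\in\R^{2n}:v^T(A+\bar\mu I)^\dagger a=0\}$ has codimension at most one, so there is a nonzero $v_{\bar\mu}\in V_1\cap V_2$. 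The same computation as in the proof of Theorem~\ref{th:p1} gives $v_{\bar\mu}^T\bar z=0$ and $v_{\bar\mu}^Ta=0$, and $(A+\bar\mu I)v_{\bar\mu}=\tilde\lambda_1v_{\bar\mu}$ means $Av_{\bar\mu}=\lambda_1v_{\bar\mu}$.

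Now write \eqref{eq:proe2} as $z^{k+1}=(z^k-\eta_e(Az^k+a))/r^k$ with $r^k=\|z^k-\eta_e(Az^k+a)\|>0$, multiply by $v_{\bar\mu}^T$, and use $v_{\bar\mu}^Ta=0$ and $Av_{\bar\mu}=\lambda_1v_{\bar\mu}$ to obtain
\[
v_{\bar\mu}^Tz^{k+1}=\frac{(1-\eta_e\lambda_1)\,v_{\bar\mu}^Tz^k}{r^k}.
\]
If $z^k\to\bar z$ then $r^k\to\|\bar z-\eta_e(A\bar z+a)\|=|1+\eta_e\bar\mu|$; moreover if $1+\eta_e\bar\mu<0$ we would get $z^{k+1}=(z^k-\eta_e(Az^k+a))/r^k\to-\bar z\neq\bar z$, contradicting $z^k\to\bar z$, so necessarily $1+\eta_e\bar\mu\ge0$. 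Combined with $\bar\mu<-\lambda_1$ and $\eta_e>0$ this yields $1-\eta_e\lambda_1>1+\eta_e\bar\mu\ge0$, hence $(1-\eta_e\lambda_1)/r^k$ is strictly positive for every $k$ and tends either to $(1-\eta_e\lambda_1)/(1+\eta_e\bar\mu)>1$ or, when $1+\eta_e\bar\mu=0$, to $+\infty$. Consequently, if $v_{\bar\mu}^Tz^0\neq0$, then $v_{\bar\mu}^Tz^k$ keeps a constant sign and $|v_{\bar\mu}^Tz^k|\to\infty$, contradicting $|v_{\bar\mu}^Tz^k|\le\|v_{\bar\mu}\|\,\|z^k\|=\|v_{\bar\mu}\|$; the finite‑termination alternative is handled by the same recursion, since if $z^k=\bar z$ for all $k\ge K$ (with $r^0,\dots,r^{K-1}>0$) then $1-\eta_e\lambda_1>0$ propagates $v_{\bar\mu}^Tz^K=v_{\bar\mu}^T\bar z=0$ backward to $v_{\bar\mu}^Tz^0=0$. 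Therefore convergence to $\bar z$ requires $z^0\in\{z:v_{\bar\mu}^Tz=0\}$, so $Z\subseteq\bigcup_{\bar\mu/2\in M}\{z\in\partial B_{2n}:v_{\bar\mu}^Tz=0\}$ is contained in a finite union of great subspheres of $\partial B_{2n}$ and thus has measure zero; since $z^0$ is uniform on $\partial B_{2n}$, the assertion follows with probability one.

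The step I expect to be the crux is the control of the multiplicative factor $(1-\eta_e\lambda_1)/r^k$: in Theorem~\ref{th:p1} the sign condition $\bar\lambda\ge0$ makes the analogous factor exceed one for free, whereas here $\bar\mu$ may be negative, so I must first extract $1+\eta_e\bar\mu\ge0$ from the fact that a limit point of \eqref{eq:proe2} is a fixed point of the normalization map before the expansion estimate becomes available; I also have to check that the finite‑termination alternative of Lemma~\ref{le:decrease} does not escape the hyperplane argument. The only genuinely degenerate case, $\lambda_1=\|H\|_2=1/\eta_e$, forces $H$ to be a positive multiple of the identity, for which \eqref{eq:ze} is affine on $\partial B_{2n}$ and one step of \eqref{eq:proe2} already lands on the global minimizer, so it is disposed of directly.
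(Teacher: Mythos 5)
Your proposal is correct and follows essentially the same route as the paper's own proof: the same vector $v_{\bar\mu}\in V_1\cap V_2$ built from the doubled eigenspace, the same expanding recursion for $v_{\bar\mu}^Tz^k$, and the same finite union of hyperplanes of bad initial points via the secular-function count. In fact, your explicit derivation of $1+\eta_e\bar\mu\ge 0$ from the fixed-point property of the limit of \eqref{eq:proe2}, together with your treatment of the finite-termination and $1+\eta_e\bar\mu=0$ alternatives, supplies details that the paper passes over when it simply asserts that \eqref{eq:15} carries over with $\eta$ replaced by $\eta_e$.
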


\begin{proof}
	If $\eta_e=1/L$ and $A=L\cdot I$, then by \eqref{eq:proe2},
$$
z^1=\frac{z^0-\frac{1}{L}(Lz^0+a)}{\|z^0-\frac{1}{L}(Lz^0+a)\|}=-\frac{a}{\|a\|}.
$$
By Cauchy-Schwartz inequality, we have
\begin{equation}\label{eq:scalar}
v\eqref{eq:TRS_e}=\min \left\{\frac{L}{2}+ a^Tx:\, x\in \partial B_n \right\}=\frac{L}{2}-\|a\|_2=f(z^1).
\end{equation}
That is, the global minimizer is obtained after one step. In the following, it is  sufficient to consider the case that either $\eta_e<1/L$ or $A\neq L\cdot I$.
	
Setting the same $v_{\bar\lambda}$ as that in the proof of Theorem \ref{th:p1}, we obtain a relation similar to \eqref{eq:13} with $\eta$ being replaced by $\eta_e$.
Moreover, we have
\begin{equation}
1-\eta_e\tilde\lambda_1+\eta_e\bar\lambda=1-\eta_e\lambda_1\ge 1-\eta_e L
\ge 0. \label{etabd}
\end{equation}
In case of $A\neq L\cdot I$, it holds that $\lambda_1<L$. So we have either
$\lambda_1<L$ or $\eta_e<1/L$. It implies that at least one of the two inequalities in
\eqref{etabd} holds strictly.
Therefore, according to \eqref{eq:13}, $v_{\bar\lambda}^T z^k\neq0$ and the sign of $v_{\bar\lambda}^T z^k$ remains unchanged for all $k>0$. Moreover, \eqref{eq:15} holds true if $\eta$ is replaced by $\eta_e$. The remaining part of the proof is the same as that of Theorem \ref{th:p1}.
\end{proof}
\begin{rem}
None of the proofs of Proposition \ref{th:eTRS}, Lemma \ref{le:decrease} and Theorem \ref{th:conv_e} holds true if we  set $\eta_e> 1/L$.
\end{rem}

\begin{rem}
	In \cite[Example 1, Theorem 2]{Polyak19}, \eqref{eq:proe} is proposed for solving unit-sphere constrained optimization problem, where  the Le\u{z}anski-Polyak-Lojasiewicz condition on the sphere is assumed to guarantee the convergence to the global minimizer. However, this assumption is too restrictive for \eqref{eq:TRS_e}.
\end{rem}

\begin{rem}
Jain and Kar claimed in  \cite[Theorem 3.3]{Jain17} that the sequence generated by \eqref{eq:x} converges to the global minimizer, under the conditions of $\alpha$-restricted strong smoothness and $\beta$-restricted strong convexity 	
\begin{equation*}
	\frac{\alpha}{2}{\|x-y\|}^2
	\le q(x)-q(y)-\langle\nabla q(y), x-y\rangle
	\le \frac{\beta}{2}{\|x-y\|}^2
\end{equation*}
with $\beta/\alpha<2$ for the objective function  $q(\cdot)$ with the constant step size $\eta_e=1/\beta$.
We point out that this claim is incorrect, as the convergence point may be a local non-global minimizer. The following is a counterexample.
\end{rem}

\begin{example}\label{ex:gPG_error}
	Consider the instance of  \eqref{eq:TRS_e} in $\R^2$:
	\begin{equation}\label{eq:ex3}
		\begin{array}{cl}
			\min  \{\frac{27}{2}x_1^2+\frac{53}{2}x_2^2-4x_1+9x_2:\, x\in \partial B_2\}.
		\end{array}
	\end{equation}
	One can verify that the $\alpha$-restricted strong smoothness and $\beta$-restricted strong convexity with $\beta/\alpha<2$ holds with $\alpha=27$ and $\beta=53$. According to Example 1.1 of \cite{Wang9020}, Problem \eqref{eq:ex3} has both global and local non-global minimizers. Either could be a convergence point of \eqref{eq:x}.
\end{example}

\begin{rem}
Lee et al. \cite{Lee2019} proved that  (RG) almost always avoid strict saddle point of \eqref{eq:TRS_e} by regarding it  as an optimization problem on the manifold. The  related step size is corrected in \cite{Zheng22}. Since \eqref{eq:TRS_e} could have
a local non-global minimizer (see Example \ref{ex:gPG_error}), (RG) cannot solve \eqref{eq:TRS_e} globally with probability $1$.
\end{rem}

\subsection{\eqref{eq:TRS}-reformulation of \eqref{eq:TRS_e}}

According to \eqref{eq:scalar}, \eqref{eq:TRS_e} is trivial to solve if $H$ is a scalar matrix. Throughout this subsection, we only consider nontrivial \eqref{eq:TRS_e} where $H$ is not  a scalar matrix.  We start with a technical lemma.
\begin{lemma}\label{tau}
Let $\tau={\rm tr}(H)/n$. If $H$ is not  a scalar matrix, then $H- \tau I\not\succeq 0$ and
\begin{equation}
\|H- \tau I\|_2 <2 \|H \|_2.
\end{equation}
\end{lemma}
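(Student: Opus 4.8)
\textbf{Proof proposal for Lemma~\ref{tau}.}
The plan is to split the statement into its two assertions and handle each through the spectrum of $H$. Let $\mu_1\le\mu_2\le\cdots\le\mu_n$ be the eigenvalues of $H$ (so $\mu_1=\lambda_1$ in the paper's notation), and recall $\tau=\operatorname{tr}(H)/n=\frac1n\sum_i\mu_i$, i.e.\ $\tau$ is the average of the eigenvalues. Since $H$ is not a scalar matrix, not all $\mu_i$ are equal, hence $\mu_1<\tau<\mu_n$ strictly. The eigenvalues of $H-\tau I$ are $\mu_i-\tau$, and the smallest of them is $\mu_1-\tau<0$; this immediately gives $H-\tau I\not\succeq 0$, settling the first claim.

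For the norm bound, I would note $\|H-\tau I\|_2=\max\{\,\tau-\mu_1,\ \mu_n-\tau\,\}$ (both quantities are positive by the strict inequalities above), while $\|H\|_2=\max\{|\mu_1|,|\mu_n|\}$. The plan is to show each of the two candidates for $\|H-\tau I\|_2$ is strictly less than $2\|H\|_2$. For the term $\tau-\mu_1$: since $\tau$ is an average of eigenvalues, $\tau\le\mu_n\le\|H\|_2$ and also $-\mu_1\le|\mu_1|\le\|H\|_2$, so $\tau-\mu_1\le 2\|H\|_2$; strictness comes from the fact that $\tau<\mu_n$ is strict (as $H$ is not scalar), so at least one of the two bounds is strict, giving $\tau-\mu_1<2\|H\|_2$. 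Symmetrically, $\mu_n-\tau\le\mu_n+|\tau|$; here $\mu_n\le\|H\|_2$ and $-\tau\le|\tau|\le\max\{|\mu_1|,|\mu_n|\}=\|H\|_2$ (since $\tau$ lies between $\mu_1$ and $\mu_n$), so $\mu_n-\tau\le 2\|H\|_2$, and again one of these is strict because $\mu_1<\tau$ is strict. Taking the maximum of the two strict bounds yields $\|H-\tau I\|_2<2\|H\|_2$.

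I expect the only delicate point to be bookkeeping the strictness: one must be careful that $\tau$ is a genuine convex combination (average) of the $\mu_i$, so that $\mu_1<\tau<\mu_n$ whenever the spectrum is not a single point, and then track which inequality in each chain is strict. A clean way to package this is: from $\mu_1<\tau$ and $\tau<\mu_n$ one gets $|\tau|<\max\{|\mu_1|,|\mu_n|\}=\|H\|_2$ unless $\tau=0$ with $\mu_1=-\|H\|_2$ or similar — but even in edge cases $-\tau<\|H\|_2$ and $\tau<\|H\|_2$ hold as soon as $|\tau|\le\|H\|_2$ with the earlier strict spectral separation feeding into the final maximum, so no genuine obstacle arises. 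No deep tool is needed beyond the variational characterization of $\|\cdot\|_2$ for symmetric matrices and the averaging property of the trace.
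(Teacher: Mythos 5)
Your proof is correct, and while it follows the same basic spectral strategy as the paper (everything reduced to the eigenvalues $\mu_1\le\cdots\le\mu_n$ plus the fact that $\tau$ is their average, hence $\mu_1<\tau<\mu_n$ when $H$ is not scalar), your treatment of the norm bound is genuinely different and, in fact, more careful. The paper argues via the chain $\|H-\tau I\|_2\le\|H\|_2+\tau<\|H\|_2+\lambda_n\le 2\|H\|_2$; the first step there is the triangle inequality and should really read $\|H\|_2+|\tau|$, which matters when $\tau<0$ — for instance $H=\diag(-10,-10,-10,-2)$ has $\tau=-8$ and $\|H-\tau I\|_2=6>2=\|H\|_2+\tau$ — so as written that chain needs the extra observation $|\tau|<\|H\|_2$ (which does follow from $\mu_1<\tau<\mu_n$) to be repaired. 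Your route instead uses the exact identity $\|H-\tau I\|_2=\max\{\tau-\mu_1,\;\mu_n-\tau\}$ and bounds each branch strictly by $2\|H\|_2$ using $\tau<\mu_n\le\|H\|_2$ together with $-\mu_1\le\|H\|_2$ for the first branch, and $\mu_n\le\|H\|_2$ together with $-\tau<-\mu_1\le\|H\|_2$ for the second; this sidesteps the sign issue entirely and every step is airtight. The only stylistic criticism is the hedging in your final paragraph about ``edge cases'': none exist, since the strict inequalities $\mu_1<\tau<\mu_n$ already give $\tau<\|H\|_2$ and $-\tau<\|H\|_2$ unconditionally, which is exactly the strictness needed in each branch, so you could state the argument without qualification.
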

\begin{proof}
Let $\lambda_1\le\cdots\le\lambda_n$ be eigenvalues of $H$. As $H$ is not  a scalar matrix, it holds that $\lambda_1<\lambda_n$. Then, we have
\[
\tau=\frac{1}{n}{\rm tr}(H)=\frac{1}{n}\sum_{i=1}^n\lambda_i>  \lambda_1.
\]
It follows that $H- \tau I\not\succeq 0$. Moreover, one can verify that
\[
\|H- \tau I\|_2\le \|H \|_2 +\tau = \|H \|_2 +\frac{1}{n}\sum_{i=1}^n\lambda_i
< \|H \|_2 +\lambda_n\le 2\|H \|_2.
\]
\end{proof}

\begin{theorem} \label{thm:tau}
\eqref{eq:TRS_e} has the same global minimizer as the following \eqref{eq:TRS}:
\begin{equation}\label{eq:etrs2}
\min \left\{\frac{1}{2}x^T(H-\tau I)x+c^Tx:\, x \in B_n\right\},
\end{equation}
where $\tau={\rm tr}(H)/n$.
\end{theorem}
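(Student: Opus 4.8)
The plan is to show that \eqref{eq:TRS_e} and \eqref{eq:etrs2} have exactly the same set of global minimizers by exploiting the fact that on the feasible set $\partial B_n$ the two objective functions differ only by a constant. First I would observe that for any $x\in\partial B_n$, i.e.\ $x^Tx=1$, we have
\[
\tfrac{1}{2}x^T(H-\tau I)x+c^Tx=\tfrac{1}{2}x^THx+c^Tx-\tfrac{\tau}{2}=q(x)-\tfrac{\tau}{2}.
\]
Hence the restriction of the objective of \eqref{eq:etrs2} to $\partial B_n$ agrees with $q$ up to the additive constant $-\tau/2$, so \eqref{eq:TRS_e} and the \emph{equality-constrained} version of \eqref{eq:etrs2} share the same global minimizers. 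The real content is therefore to argue that the \emph{inequality}-constrained problem \eqref{eq:etrs2} attains its minimum only on the boundary $\partial B_n$, so that relaxing $x^Tx=1$ to $x^Tx\le1$ does not change the global minimizer.

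The key step is to invoke Lemma \ref{tau}: since $H$ is not a scalar matrix, $H-\tau I\not\succeq0$, i.e.\ the matrix defining the quadratic form in \eqref{eq:etrs2} has a negative eigenvalue $\lambda_{\min}(H-\tau I)=\lambda_1-\tau<0$. I would then apply Lemma \ref{le:global} to \eqref{eq:etrs2}: at a global minimizer $x^*$ with KKT multiplier $\lambda^*/2$ one needs $\lambda^*\ge-\lambda_{\min}(H-\tau I)=\tau-\lambda_1>0$. Since $\lambda^*>0$, complementarity \eqref{eq:com} forces $x^{*T}x^*=1$, so $x^*\in\partial B_n$. Thus every global minimizer of \eqref{eq:etrs2} is feasible for \eqref{eq:TRS_e}, and conversely every point of $\partial B_n$ is feasible for \eqref{eq:etrs2}; combined with the constant-offset identity above, the two problems have identical global minimizers. (As a sanity check one should also note $\|H-\tau I\|_2<2\|H\|_2$ from Lemma \ref{tau}, which is what guarantees that the step size $\eta$ in \eqref{eq:pro} applied to \eqref{eq:etrs2} can be taken larger than the $\eta_e\le 1/L$ used for \eqref{eq:proe2} — this is the "larger step size" claim advertised in the introduction, though strictly it belongs to the discussion following the theorem rather than to the equivalence statement itself.)

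I expect the main obstacle to be purely a matter of careful bookkeeping rather than deep argument: one must be precise that Lemma \ref{le:global} is being applied to \eqref{eq:etrs2} as a genuine instance of \eqref{eq:TRS} (with $H$ replaced by $H-\tau I$ and the same $c$), and that the strict inequality $\tau-\lambda_1>0$ — which comes from $\lambda_1<\lambda_n$ in the nontrivial case — is what upgrades $\lambda^*\ge0$ to $\lambda^*>0$ and hence activates the constraint. A secondary point worth stating explicitly is uniqueness: if one wants "the same global minimizer" in the literal singular sense, one should remark that \eqref{eq:etrs2} falls in the "easy/well case" or otherwise invoke the standard characterization so that the global minimizer is unique; otherwise the cleaner statement is that the two problems have the same \emph{set} of global minimizers. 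No lengthy computation is needed beyond the one-line identity $x^T(\tau I)x=\tau$ on $\partial B_n$.
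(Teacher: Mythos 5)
Your proposal is correct and follows essentially the same route as the paper: both rest on Lemma \ref{tau} giving $H-\tau I\not\succeq 0$, conclude that every minimizer of \eqref{eq:etrs2} must lie on $\partial B_n$, and use the fact that on $\partial B_n$ the two objectives differ only by the constant $-\tau/2$. The only cosmetic difference is that you rule out interior minimizers by applying Lemma \ref{le:global} and complementarity (forcing $\lambda^*\ge\tau-\lambda_1>0$), whereas the paper simply observes that an interior local minimizer would require $H-\tau I\succeq 0$.
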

\begin{proof}
According to Lemma \ref{tau},  $H-\tau I\not\succeq 0$. Then any local minimizer of \eqref{eq:etrs2}, denoted by $x^*$, cannot be an interior point of $B_n$, i.e., $(x^*)^Tx^*=1$ must hold. It completes the proof of the equivalence.
\end{proof}

Based on Theorem \ref{thm:tau}, \eqref{eq:TRS_e} can be solved by directly employing Algorithm \ref{al:1} to \eqref{eq:etrs2}. Note that the supremum of the  step size is $\hat\eta=2/\hat L$, where $\hat L=\|H-\tau I\|_2$.
According to Lemma \ref{tau},   we have
\[
\eta_e\le \frac{1}{\|H\|_2} <   \frac{2}{\hat L}=\hat\eta.
\]
Thus, the benefit of this \eqref{eq:TRS}-reformulation approach is that the step size could be larger than that of \eqref{eq:proe}.
In particular, when $H$ is positive semidefinite (for example, the least square problem over a sphere \cite{Gander81}), we have
 \[
	\eta_e\le \frac{1}{\|H\|_2} < \frac{1}{\hat L}=\frac{\hat\eta}{2},
\]
that is, the step size of Algorithm \ref{al:1} could be twice that of \eqref{eq:proe}.

\section{Conclusion}
We show that the sequence generated by the classical projected gradient method for solving a cheap but novel reformulation of the trust region subproblem (TRS) almost always converges to its global minimizer. The local convergence rate is at least linear for the ``easy case''. As an extension, we establish and analyze the generalized projected gradient method for  globally solving the similarly lifted equality-constrained (TRS), denoted by (TRSe). However, an alternative approach by globally solving a new nonconvex (TRS)-reformulation of (TRSe) via projected gradient method seems to be better in the sense that it allows a larger step size. Some problems remain open, including the local convergence rate of the generalized projected gradient method and the acceleration version of the projected gradient algorithm for solving our new reformulations of (TRS) and (TRSe).


\bibliographystyle{siamplain}
\bibliography{reference}

\begin{thebibliography}{10}

\bibitem{Absil08}
{\sc P.-A. Absil, R.~Mahony, and R.~Sepulchre}, {\em Optimization Algorithms on
  Matrix Manifolds}, Princeton University Press, 2008.

\bibitem{Adachi17}
{\sc S.~Adachi, S.~Iwata, Y.~Nakatsukasa, and A.~Takeda}, {\em Solving the
  trust-region subproblem by a generalized eigenvalue problem}, SIAM Journal on
  Optimization, 27 (2017), pp.~269--291.

\bibitem{Polyak19}
{\sc M.~V. Balashov, B.~T. Polyak, and A.~A. Tremba}, {\em Gradient projection
  and conditional gradient methods for constrained nonconvex minimization},
  Numerical Functional Analysis and Optimization, 41 (2020), pp.~822--849.

\bibitem{Beck2017First}
{\sc A.~Beck}, {\em First-Order Methods in Optimization}, MOS-SIAM Ser. Optim.
  25, SIAM, Philadelphia, PA, 2017.

\bibitem{Beck06}
{\sc A.~Beck and Y.~C. Eldar}, {\em Strong duality in nonconvex quadratic
  optimization with two quadratic constraints}, SIAM Journal on Optimization,
  17 (2006), pp.~844--860.

\bibitem{Beck2018}
{\sc A.~Beck and Y.~Vaisbourd}, {\em Globally solving the trust region
  subproblem using simple first-order methods}, SIAM Journal on Optimization,
  28 (2018), pp.~1951--1967.

\bibitem{Boumal2019}
{\sc N.~Boumal, V.~Voroninski, and A.~S. Bandeira}, {\em Deterministic
  guarantees for {B}urer-{M}onteiro factorizations of smooth semidefinite
  programs}, Communications on Pure and Applied Mathematics, 73 (2019),
  pp.~581--608.

\bibitem{Burer03}
{\sc S.~Burer and R.~Monteiro}, {\em A nonlinear programming algorithm for
  solving semidefinite programs via low-rank factorization}, Mathematical
  Programming, 95 (2003), pp.~329--357.

\bibitem{Carmon18}
{\sc Y.~Carmon and J.~C. Duchi}, {\em Analysis of {Krylov} subspace solutions
  of regularized non-convex quadratic problems}, in Advances in Neural
  Information Processing Systems, S.~Bengio, H.~Wallach, H.~Larochelle,
  K.~Grauman, N.~Cesa-Bianchi, and R.~Garnett, eds., vol.~31, Curran
  Associates, Inc., 2018.

\bibitem{Conn00}
{\sc A.~R. Conn, N.~I.~M. Gould, and P.~L. Toint}, {\em Trust Region Methods},
  Society for Industrial and Applied Mathematics, Philadelphia, 2000.

\bibitem{Fletcher1987}
{\sc R.~Fletcher}, {\em Practical Methods of Optimization}, John Wiley, New
  York, second~ed., 1987.

\bibitem{Flippo96}
{\sc O.~E. Flippo and B.~Jansen}, {\em Duality and sensitivity in nonconvex
  quadratic optimization over an ellipsoid}, European Journal of Operational
  Research, 94 (1996), pp.~167--178.

\bibitem{Gander81}
{\sc W.~Gander}, {\em Least squares with a quadratic constraint}, Numerische
  Mathematik, 36 (1980), pp.~291--307.

\bibitem{Gay81}
{\sc D.~M. Gay}, {\em Computing optimal locally constrained steps}, SIAM
  Journal on Scientific and Statistical Computing, 2 (1981), pp.~186--197.

\bibitem{Gould99}
{\sc N.~I.~M. Gould, S.~Lucidi, M.~Roma, and P.~L. Toint}, {\em Solving the
  trust-region subproblem using the {Lanczos} method}, SIAM Journal on
  Optimization, 9 (1999), pp.~504--525.

\bibitem{Hager01}
{\sc W.~W. Hager}, {\em Minimizing a quadratic over a sphere}, SIAM Journal on
  Optimization, 12 (2001), pp.~188--208.

\bibitem{Hazan16}
{\sc E.~Hazan and T.~Koren}, {\em A linear-time algorithm for trust region
  problems}, Mathematical Programming, 158 (2016), pp.~363--381.

\bibitem{Ho-Nguyen17}
{\sc N.~Ho-Nguyen and F.~K{\i}l{\i}n{\c c}-Karzan}, {\em A second-order cone
  based approach for solving the trust-region subproblem and its variants},
  SIAM Journal on Optimization, 27 (2017), pp.~1485--1512.

\bibitem{Jain17}
{\sc P.~Jain and P.~Kar}, {\em Non-convex optimization for machine learning},
  10 (2017), pp.~142--363.

\bibitem{Jiang22}
{\sc R.~Jiang and X.~Li}, {\em H{\"o}lderian error bounds and
  {Kurdyka-Lojasiewicz} inequality for the trust region subproblem},
  Mathematics of Operations Research,  (2022).
\newblock Published online, https://doi.org/10.1287/moor.2021.1243.

\bibitem{Lee2019}
{\sc J.~D. Lee, I.~Panageas, G.~Piliouras, M.~Simchowitz, M.~I. Jordan, and
  B.~Recht}, {\em First-order methods almost always avoid strict saddle
  points}, Mathematical Programming, 176 (2019), pp.~311--337.

\bibitem{Luenberger1984}
{\sc D.~G. Luenberger and Y.~Ye}, {\em Linear and Nonlinear Programming},
  Springer, New York, third~ed., 2008.

\bibitem{Mart94}
{\sc J.~M. Mart\'{i}nez}, {\em Local minimizers of quadratic functions on
  {E}uclidean balls and spheres}, SIAM Journal on Optimization, 4 (1994),
  pp.~159--176.

\bibitem{More83}
{\sc J.~J. Mor\'{e} and D.~C. Sorensen}, {\em Computing a trust region step},
  SIAM Journal on Scientific and Statistical Computing, 4 (1983), pp.~553--572.

\bibitem{PY2020}
{\sc A.~H. Phan, M.~Yamagishi, D.~Mandic, and A.~Cichocki}, {\em Quadratic
  programming over ellipsoids with applications to constrained linear
  regression and tensor decomposition.}, Neural Computing \& Applications,
  (2020), pp.~7097--7120.

\bibitem{Polik07}
{\sc I.~P\'{o}lik and T.~Terlaky}, {\em A survey of the {S}-lemma}, SIAM
  Review, 49 (2007), pp.~371--418.

\bibitem{Sorensen82}
{\sc D.~C. {Sorensen}}, {\em Newton's method with a model trust region
  modification}, SIAM Journal on Numerical Analysis, 19 (1982), pp.~409--426.

\bibitem{Sorensen97}
{\sc D.~C. Sorensen}, {\em Minimization of a large-scale quadratic function
  subject to a spherical constraint}, SIAM Journal on Optimization, 7 (1997),
  pp.~141--161.

\bibitem{Tao98}
{\sc P.~D. Tao and L.~T.~H. An}, {\em A {D.C.} optimization algorithm for
  solving the trust-region subproblem}, SIAM Journal on Optimization, 8 (1998),
  pp.~476--505.

\bibitem{Wang9022}
{\sc J.~Wang, M.~Song, and Y.~Xia}, {\em Trust-region and $p$-regularized
  subproblems: local nonglobal minimum is the second smallest objective
  function value among all first-order stationary points}.
\newblock arXiv:2108.07963, 2022.

\bibitem{wang17}
{\sc J.~Wang and Y.~Xia}, {\em A linear-time algorithm for the trust region
  subproblem based on hidden convexity}, Optimization Letters, 11 (2017),
  pp.~1639--1646.

\bibitem{Wang9020}
{\sc J.~Wang and Y.~Xia}, {\em Closing the gap between necessary and sufficient
  conditions for local nonglobal minimizer of trust region subproblem}, SIAM
  Journal on Optimization, 30 (2020), pp.~1980--1995.

\bibitem{yuan15}
{\sc Y.~x.~{Yuan}}, {\em Recent advances in trust region algorithms},
  Mathematical Programming, 151 (2015), pp.~249--281.

\bibitem{Xia2020}
{\sc Y.~Xia}, {\em A survey of hidden convex optimization.}, Journal of the
  Operations Research Society of China,  (2020), pp.~1--28.

\bibitem{Y92}
{\sc Y.~Ye}, {\em A new complexity result on minimization of a quadratic
  function with a sphere constraint}, in Recent Advances in Global
  Optimization, Princeton University Press, USA, 1992, pp.~19--31.

\bibitem{Zhang17}
{\sc L.-H. Zhang, C.~Shen, and R.-C. Li}, {\em On the generalized {Lanczos}
  trust-region method}, SIAM Journal on Optimization, 27 (2017),
  pp.~2110--2142.

\bibitem{Zheng22}
{\sc J.~Zheng and Y.~Xia}, {\em Comment on ``first-order methods almost always
  avoid strict saddle points''}.
\newblock arXiv:2204.00521, 2022.

\end{thebibliography}
\end{document}